\def\({\left(}
\def\){\right)}
\def\trace{\mathop{\mathrm{trace}}}
\def\diag{\mathop{\mathrm{diag}}}
\def\max{\mathop{\mathrm{max}}}
\def\conj{\mathop{\mathrm{conj}}}
\def\cond{\mathop{\mathrm{cond}}}
\def\argmin{\mathop{\mathrm{argmin}}}
\numberwithin{equation}{section}
\numberwithin{theorem}{section}
\numberwithin{lemma}{section}
\numberwithin{corollary}{section}
\numberwithin{proposition}{section}
\numberwithin{example}{section}
\spnewtheorem{algorithm}{Algorithm}[section]{\bf}{\rm}
\journalname{}
\begin{document}

\title{Computing the Mittag-Leffler Function of a Matrix Argument 
}


\author{Jo\~{a}o R. Cardoso}


\institute{Jo\~{a}o R. Cardoso \at Polytechnic Institute of Coimbra, Coimbra Institute of
Engineering, \\
Rua Pedro Nunes - Quinta da Nora, 3030-199\\
Coimbra, Portugal
           and\\
              CMUC -- Center for Mathematics, University of Coimbra, \\
              Coimbra, Portugal \\
              \email{jocar@isec.pt}   }        

\date{Received: date / Accepted: date}

\maketitle

\begin{abstract}

It is well-known that the two-parameter Mittag-Leffler (ML) function plays a key role in Fractional Calculus. In this paper, we address the problem of computing this  function, when its argument is a square matrix. Effective  methods for solving this problem involve the computation of higher order derivatives or require the use of mixed precision arithmetic. In this paper, we provide an alternative method that is derivative-free and works entirely using IEEE standard double precision arithmetic. If certain conditions are satisfied, our method uses a Taylor series representation for the ML function; if not, it switches to a Schur-Parlett technique that will be combined with the Cauchy integral formula. A detailed discussion on the choice of a convenient contour is included. Theoretical and numerical issues regarding the performance of the proposed algorithm are discussed. A set of numerical experiments shows that our novel approach is competitive with the state-of-the-art method for IEEE double precision arithmetic, in terms of accuracy and CPU time. For matrices whose Schur decomposition has large blocks with clustered eigenvalues, our method far outperforms the other. Since our method does not require the efficient computation of higher order derivatives, it has the additional advantage of being easily extended to other matrix functions (e.g., special functions).

\keywords{Mittag-Leffler function, Taylor series, fractional calculus, Cauchy integral formula, trapezoidal rule}
\subclass{65F60 \and 65D32}

\end{abstract}

\section{Introduction}

Given a square complex matrix ${A}\in \mathbb{C}^{n\times n}$ and the real numbers $\alpha>0$ and $\beta$, the matrix Mittag-Leffler (ML) function with two parameters can be defined through the convergent power series
\begin{equation}\label{ml-function}
	E_{\alpha,\beta}({A})=\sum_{k=0}^{\infty}\frac{{A}^{k}}{\Gamma(\alpha k+\beta)}=\frac{1}{\Gamma(\beta)}{I}+\frac{{A}}{\Gamma(\alpha +\beta)}+\frac{{A}^{2}}{\Gamma(2 \alpha +\beta)}+\cdots,
\end{equation}
where $\Gamma$ denotes the Euler Gamma function. Note that the ML function is defined for any square matrix  ${A}\in \mathbb{C}^{n\times n}$. For background on matrix functions, see the books \cite{Higham08,Horn94} and the references therein.

The literature on scalar ML functions is vast, so we refer the reader to the books \cite{Gorenflo14,Podlubny1} and the references therein for background on its the theory and applications. With respect to the numerical computation, many techniques have been proposed in the literature. We highlight in particular the state-of-the-art method of \cite{Garrappa15} and also the effective methods of \cite{Gorenflo02} and \cite{Seybold08}. 

In contrast with its scalar counterpart, the ML function with a matrix argument has been less studied, but in the late few years the interest in designing algorithms for its computation has increased \cite{Garrappa11,Garrappa17,Garrappa18,Higham21,Moreti11}. In \cite{Garrappa18}, the authors adapt the Schur-Parlett method of \cite{Davies03} to the particular case of the ML function, which requires the computation of the scalar ML functions $E_{\alpha,\beta}(z)$ and its higher order derivatives $E^{(k)}_{\alpha,\beta}(z)$, with $k=1,2,\ldots.$. At a first glance, this seems to be a trivial task, but the computation of $E^{(k)}_{\alpha,\beta}(z)$ is actually quite challenging. That is why the major part of \cite{Garrappa18} is devoted to finding effective methods for derivative computations. In \cite{Garrappa18}, one can also find a couple of interesting applications of matrix ML functions in Fractional Calculus. More recently, in \cite[Sec. 6]{Higham21}, the authors propose a derivative-free algorithm for the ML matrix function using mixed precision arithmetic. One of the major goals of our paper is to provide a competitive alternative method that is derivative-free and  works entirely in IEEE double precision arithmetic. 

For a matrix $A$ with ``small'' norm, our method evaluates $E_{\alpha,\beta}({A})$ using the Taylor series \eqref{ml-function}; for general matrices, we start by computing the complex blocked  Schur form of $A$ with reordering \cite{Davies03}, $A=UTU^\ast$, where $U$ is unitary and $T$ is triangular with ``well separated'' blocks $T_{11}, \ldots,T_{qq}$, in its diagonal. Each block $T_{ii}$ is triangular with close eigenvalues and is usually known as ``atomic block''. Since 
$$E_{\alpha,\beta}(A)= U\,E_{\alpha,\beta}(T)\,U^\ast,$$
the computation of the ML function of a general square matrix $A$ can be reduced to the computation of the ML function of a triangular matrix $T$. The block diagonal of $E_{\alpha,\beta}(T)$ is $$\diag\left(E_{\alpha,\beta}(T_{11}),\ldots,E_{\alpha,\beta}(T_{qq})\right),$$
with $T$ being viewed as a $q\times q$ block matrix. Once $E_{\alpha,\beta}(T_{ii})$ is evaluated, the remaining entries of $E_{\alpha,\beta}(T)$ may be computed by means of the block Parlett recurrence, which is based on solving Sylvester matrix equations of smaller size; see more details in Section \ref{schur-parlett}. Hence, an important issue now is how to evaluate the ML function of an atomic block in a stable and efficient fashion.  

 An important difference between our method and the one in \cite{Garrappa18} is in the computation of the ML function of the atomic blocks $E_{\alpha,\beta}(T_{ii})$. While \cite{Garrappa18} adapts the method of \cite{Davies03}, which requires the efficient computation of higher order derivatives of the ML function, our proposal uses the well-known Cauchy integral formula for matrix functions. This raises a couple of questions, like: how to find a suitable contour to approximate the Cauchy integral that minimizes the quadrature error and the computational cost? What is the most convenient quadrature for this type of integrals?
 
 Although our method involves a rough estimate of the second-order derivative of a certain function to estimate the length of the radius of a circle (see Section \ref{contour}), this derivative is not used directly in forming $E_{\alpha, \beta}(A)$, making it legitimate to call it derivative-free.
 
 Given a complex square matrix $A$, the matrix version of the Cauchy integral theorem \cite[Ch.1]{Higham08} states that  
 \begin{equation}\label{cauchy}
 	f(A)=\frac{1}{2\pi i}\int_{\mathcal C} f(z)(zI-A)^{-1}\ dz,
 \end{equation}
 where $f$ is assumed to be analytic on and inside a closed contour ${\mathcal C}$ enclosing the spectrum $\sigma(A)$ of $A$. 
 
 Cauchy's integral formula \eqref{cauchy} has rarely been used in the computation of functions of matrices. However, as pointed out in \cite{Trefethen}, an appropriate choice of the contour ${\mathcal C}$ followed by an evaluation by the trapezoidal rule, can lead to very good results for some particular functions and matrices. In this work, we show that, for matrices with clustered eigenvalues, choosing a circle for the contour (with an appropriate center and radius), and applying the trapezoidal rule, 
 provides a promising method for computing the ML function. This technique has the advantage of being easily extendable to other matrix function, namely to functions whose properties are not suitable for exploiting in numerical computations; this is the case for many special functions.
 
 Our interest in the Cauchy integral formula \eqref{cauchy} does not lie specifically in its application for approximating $E_{\alpha,\beta}(A)$ for any square matrix $A$, but instead for computing the ML of the atomic blocks $T_{ii}$ arising in the blocked Schur decomposition of $A$ with reordering. Since these blocks are upper triangular matrices with clustered eigenvalues, the contour circle has a radius that is in general of limited length, thus avoiding high oscillations in the integrand function.  

Although we often use the terminology ``Mittag-Leffler function'' (singular), in fact we are dealing with an infinite family of functions, depending on the values of the parameters $\alpha$ and $\beta$. For instance, if $\alpha=\beta=1$, then $E_{1,1}(z)=e^z$ is the exponential, while $E_{2,2}(z)=\frac{\sinh(\sqrt{z})}{\sqrt{z}}$. Many other known functions can be viewed as particular ML functions \cite{Gorenflo14}. It is easy to observe that the values of $\alpha$ and $\beta$ influence the convergence of the series (\ref{ml-function}) and its practical usage for computational purposes. The Gamma function in the denominators grows very fast for large values of $\alpha$ and slows down for smaller ones (say $0< \alpha< 1$). As we will later, for sufficiently large $\alpha$ and $\beta$ and small$\|A\|$, the Taylor series (\ref{ml-function}) may yield very interesting results in computations.      

The organization of the paper is as follows. In Section \ref{revisiting}, there is a brief revision of the scalar ML function, mainly focused on the existing effective methods for its numerical computation. Section \ref{sec-taylor} provides a novel analysis of the  usage of the Taylor series in numerical computation of the ML function, culminating in a result and a technique that will be incorporated later in the main algorithm for computing the matrix ML function.
In Section \ref{contour}, we discuss which types of contours and quadrature are convenient for making the Cauchy integral formula reliable in numerical computations. In Section  \ref{sec-algorithm}, the main algorithm of the paper is provided in pseudo-code. Numerical considerations on its implementation are included as well. Section \ref{sec-experiments} includes a set of experiments to illustrate the performance of the main algorithm and how it compares with the existing state-of-the-art method for IEEE double precision arithmetic. Finally, in Section \ref{sec-conclusions} some conclusions are drawn.

\section{Revisiting the Scalar ML function}\label{revisiting}

The classical ML function (also called the one-parameter ML function) was introduced and studied by the Swedish mathematician Magnus G\"{o}sta Mittag-Leffler (1846--1927) in a set of five papers \cite{Mittag1,Mittag2,Mittag3,Mittag4,Mittag5} published at the beginning of the 20th century.  The so-called two-parameter ML function, which extends the classical one, was first introduced in the paper \cite{Wiman}, but the author did not pay much attention to it. It was only much later in the papers \cite{Agarwal,Humbert} that the two-parameter ML function was rediscovered and investigated in detail. Since then, the ML functions (with one, two, or even more parameters) have attracted the interest of many researchers. This is in part due to the relevant role that these functions play in Fractional Calculus, namely in solving fractional differential equations that arise in the modeling of many practical problems in science and engineering.

In this paper, we deal with the two-parameter ML function (we omit the terminology ``two-parameter'' to simplify) and assume that $\alpha$ and $\beta$ are real and positive. This is the case that is relevant in most of the practical applications. Under those assumptions regarding $\alpha$ and $\beta$, it is well-known that $E_{\alpha,\beta}(z)$, with $z\in\mathbb{C}$, is an entire function which has two important representations suitable to be exploited for numerical computations purposes:
the Taylor series 
\begin{equation}\label{def-taylor}
	E_{\alpha,\beta}({z})=\sum_{k=0}^{\infty}\frac{{z}^{k}}{\Gamma(\alpha k+\beta)},
\end{equation}
and the Wiman's integral representation \cite{Wiman} and \cite[p. 210]{Erdelyi}:
\begin{equation}\label{def-wiman}
	E_{\alpha,\beta}({z})=\frac{1}{2\pi i}\int_{\mathcal C} \frac{y^{\alpha-\beta}e^y}{y^\alpha-z}\ dy,
\end{equation} 
where the contour ${\mathcal C}$ is the so-called Hankel path in the complex plane, that is, it is a path starting and
ending at $-\infty$ and encircling the disk $|y|\leq z^{1/\alpha}$ in the positive sense. 

The methods proposed in \cite{Gorenflo02} and \cite{Seybold08} are similar (though the method in \cite{Seybold08} makes significant improvements on the one in \cite{Gorenflo02}) and both use the truncation of the Taylor series \eqref{def-taylor} for approximating $E_{\alpha,\beta}({z})$, when $|z|$ is small. When $|z|$ is large, they use asymptotic formulae derived from \eqref{def-wiman}; for intermediate values of $|z|$, they use  the integral representation \eqref{def-wiman} directly. Both methods assume that $0< \alpha \leq 1$; for $\alpha>1$ they use a recursion formula that reduces the latter case to the former.

The method in \cite{Garrappa15} is different from the ones just mentioned, in the sense that it does not require different techniques depending on the magnitude of $z$ or $\alpha$. It involves the auxiliary function
$$e_{\alpha,\beta}(t;\lambda):=t^{\beta-1}\,E_{\alpha,\beta}(t^\alpha\,\lambda),\quad t>0, \ \lambda\in\mathbb{C},$$
and its integral representation in terms of the inverse Laplace transform. Through a combination of the Residue Theorem together with the choice of an optimal parabolic contour, that minimizes the error and the computational cost, Garrappa (2015) designed what is now considered the state-of-the-art method for evaluating $E_{\alpha,\beta}({z})$. MATLAB files for the methods in \cite{Gorenflo02} and \cite{Garrappa15} are available, respectively, in \cite{Podlubny2} and \cite{Garrappa-m}. 

To understand the influence of the values of $\alpha$ and $\beta$ on the magnitude of $E_{\alpha,\beta}({z})$, let us observe Figures \ref{fig1} and \ref{fig2}, where we see that, for $\alpha = 0.5$, the magnitude of the ML function blows up when $z$ is in the real line somewhere between $1.5$ and $2.5$. In contrast, if $\alpha$ increases slightly to $0.8$ and $\beta$ to $1.5$, then $|E_{\alpha,\beta}({z})|$ increases, although not as quickly. Analytically, this can be seen by looking at the denominator of the fractions in the sum \eqref{def-taylor}. For small values of $\alpha$, the value of the Gamma function $\Gamma(\alpha k+\beta)$ 
may not compensate for the growth of the powers of $z$ in the numerator. In both pictures, we see that the magnitude of the ML function is small whenever the real part of $z$ is negative. These kind of issues must be taken into account in the design of algorithms to the ML function. 

 
\begin{center}
\begin{figure}[t]
\hspace*{-1.5cm}\includegraphics[width=17cm]{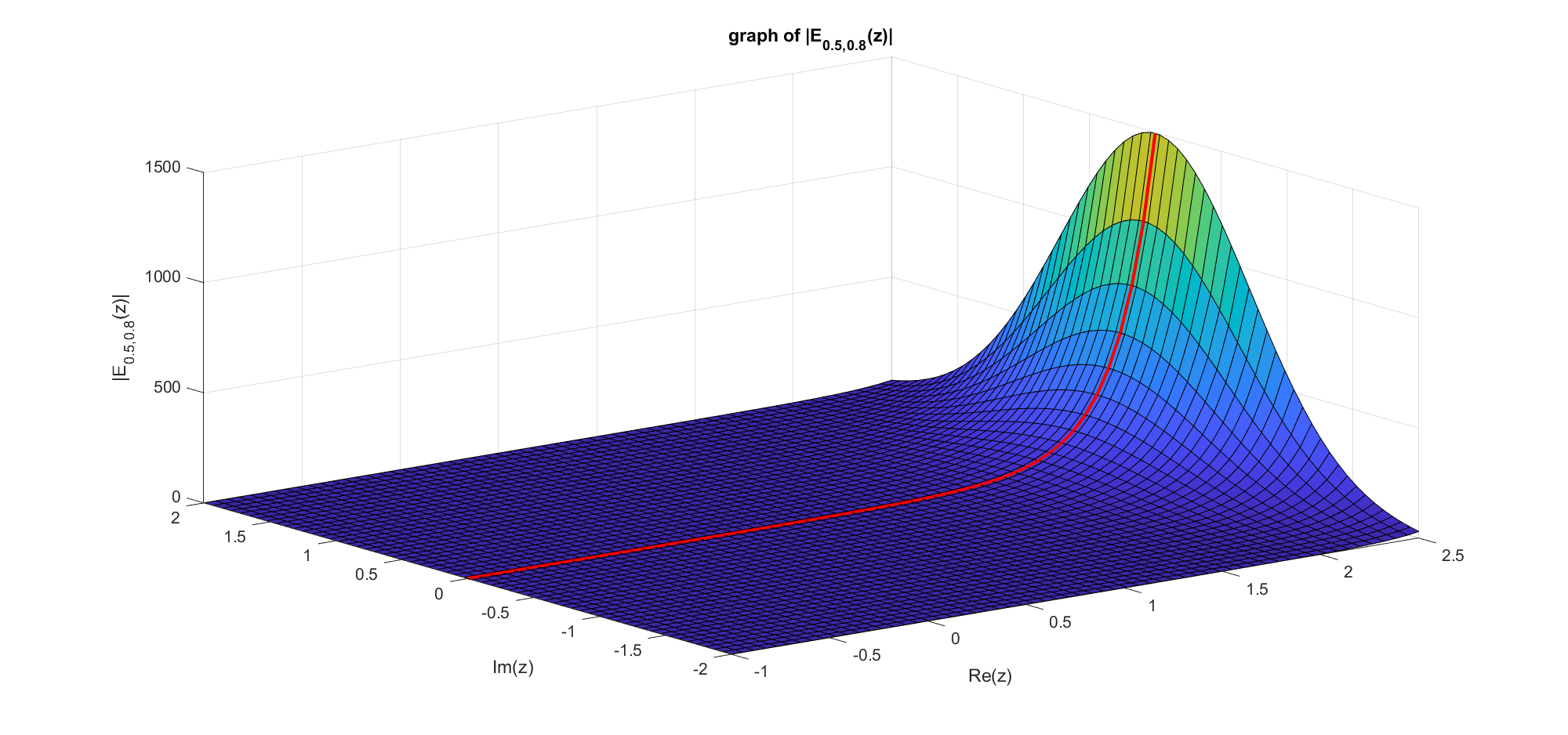}
\caption{\small Values of $|E_{\alpha,\beta}({z})|$, with $\alpha=0.5$ and $\beta=0.8$, for the  complex numbers $z$ belonging to the rectangle $[-1,2.5]\times [-2,2]$. The red line over the graph surface concerns to the values of the ML function on the real line. 
 }
\label{fig1}
\end{figure}
\end{center}

\begin{center}
\begin{figure}[t]
\centering
\includegraphics[width=14cm]{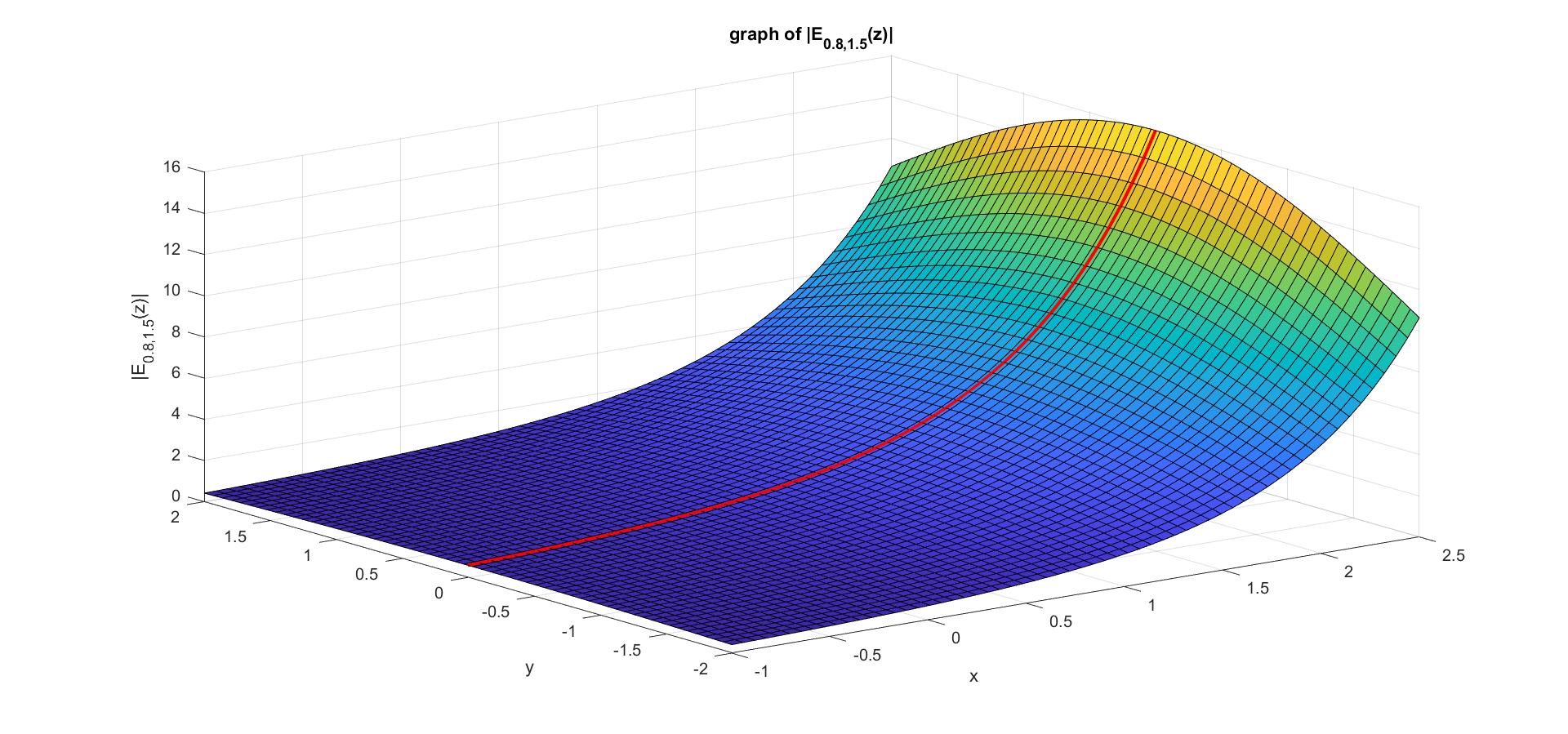}
\caption{\small Values of $|E_{\alpha,\beta}({z})|$, with $\alpha=0.8$ and $\beta=1.5$, for the  complex numbers $z$ belonging to the rectangle $[-1,2.5]\times [-2,2]$. The red line over the graph surface concerns to the values of the ML function on the real line. }
\label{fig2}
\end{figure}
\end{center}

\section{When is the Taylor Series Suitable For Numerical Computations?}\label{sec-taylor}

In this section, we state a couple of results regarding the practical usage of the Taylor series \eqref{ml-function} in numerical computations. Proposition \ref{prop-taylor-1} is an extension of \cite[Thm.4.1]{Seybold08} to matrices. Propositions \ref{prop-taylor-2} and \ref{prop-taylor-3} are, to the best of our knowledge, novel in the context of matrices as well as scalars. We recall that algorithms for computing matrix functions based on matrix polynomials are very interesting from a computational point of view in the sense that they are rich in matrix-matrix computations. Moreover, they can easily be extended to algorithms for  computing a matrix function times a vector in only $O(n^2)$ operations, avoiding the entire determination of $f(A)$. This is of great relevance if $A$ is large. 

Let $\|.\|$ be a given subordinate matrix norm and let us consider the truncation error of the Taylor series:
\begin{equation}\label{err-taylor}
	R_m(A) := \left\|E_{\alpha,\beta}(A)-\sum_{k=0}^{m}\frac{A^{k}}{\Gamma(\alpha k+\beta)}\right\|=\left\|\sum_{k=m+1}^{\infty}\frac{A^{k}}{\Gamma(\alpha k+\beta)}\right\|,
\end{equation}
where $A$ is a given complex square matrix and $\alpha$ and $\beta$ are real positive.

\begin{proposition}\label{prop-taylor-1}
Let $\epsilon>0$ and assume that $\|A\|<1$. If
$$m\geq \max \left\{\left\lceil \frac{2-\beta}{\alpha}+1\right\rceil,\quad  \left\lceil \frac{\ln(\epsilon(1-\|A\|))}{\ln(\|A\|)}\right\rceil+1\right\},$$
then $R_m(A)\leq \epsilon$, where $R_m(A)$ is given in \eqref{err-taylor} and $\lceil.\rceil$ denotes the ceiling of a number. 
\end{proposition}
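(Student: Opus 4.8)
**The plan is to bound the tail sum $R_m(A)$ by a scalar geometric series and then require $m$ to be large enough that two separate conditions hold: one that makes the Gamma function monotone enough for the bound to be valid, and one that makes the resulting geometric tail smaller than $\epsilon$.**

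First I would estimate, using submultiplicativity of the norm and the triangle inequality,
$$R_m(A) = \left\|\sum_{k=m+1}^{\infty}\frac{A^{k}}{\Gamma(\alpha k+\beta)}\right\| \leq \sum_{k=m+1}^{\infty}\frac{\|A\|^{k}}{\Gamma(\alpha k+\beta)}.$$
To turn the right-hand side into a geometric series, I would like to replace each $\Gamma(\alpha k+\beta)$ by something I can control from below. The natural move is: if $\Gamma(\alpha k+\beta)\geq 1$ for all $k\geq m+1$, then $\sum_{k=m+1}^\infty \|A\|^k/\Gamma(\alpha k+\beta)\leq \sum_{k=m+1}^\infty \|A\|^k = \|A\|^{m+1}/(1-\|A\|)$, where convergence of the geometric series uses the hypothesis $\|A\|<1$. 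The condition $\Gamma(\alpha k+\beta)\geq 1$ is where the first term in the max comes from: $\Gamma$ is at least $1$ once its argument is $\geq 2$ (since $\Gamma(2)=1$ and $\Gamma$ is increasing on $[2,\infty)$, and in fact $\Gamma(x)\geq 1$ also on $(0,1]$, but $2$ is the safe universal threshold), so I need $\alpha k+\beta\geq 2$, i.e. $k\geq (2-\beta)/\alpha$; asking this for all $k\geq m+1$ means $m+1\geq (2-\beta)/\alpha$, i.e. $m\geq (2-\beta)/\alpha - 1$, and taking ceilings gives $m\geq \lceil (2-\beta)/\alpha + 1\rceil$ with a little room to spare. (One should double-check the exact arithmetic with the ceiling and the ``$+1$'', but this is the routine part.)

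Next, with the bound $R_m(A)\leq \|A\|^{m+1}/(1-\|A\|)$ in hand, I would impose $\|A\|^{m+1}/(1-\|A\|)\leq \epsilon$. Solving for $m$: $\|A\|^{m+1}\leq \epsilon(1-\|A\|)$, and since $0<\|A\|<1$ we have $\ln\|A\|<0$, so taking logarithms and dividing (which flips the inequality) gives $m+1\geq \ln(\epsilon(1-\|A\|))/\ln\|A\|$, i.e. $m\geq \ln(\epsilon(1-\|A\|))/\ln\|A\| - 1$. Taking ceilings yields the second entry of the max, $m\geq \lceil \ln(\epsilon(1-\|A\|))/\ln\|A\|\rceil + 1$ (again with a unit of slack to absorb the ceiling). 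Requiring $m$ to dominate \emph{both} lower bounds simultaneously is exactly the hypothesis, so both steps go through and $R_m(A)\leq\epsilon$.

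The only genuinely delicate point is the first one: making sure the threshold for $\Gamma\geq 1$ is stated correctly and that the ceiling/``$+1$'' bookkeeping is consistent with $k$ ranging over $\{m+1, m+2,\dots\}$ rather than $\{m,m+1,\dots\}$. The second step is a completely standard geometric-series-plus-logarithm estimate and carries no real obstacle. I would also remark that this mirrors the scalar argument of \cite[Thm.~4.1]{Seybold08}, the only change being the use of a submultiplicative matrix norm in the very first inequality.
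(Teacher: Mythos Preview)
Your proposal is correct and follows essentially the same approach as the paper, which itself merely instructs the reader to ``follow the steps of the proof of Theorem~4.1 in \cite{Seybold08} and attend to the properties of matrix norms.'' You have reconstructed precisely that argument---reduce to the scalar tail via submultiplicativity, use $\Gamma(\alpha k+\beta)\geq 1$ once $\alpha k+\beta\geq 2$, and bound by a geometric series---so there is nothing to add beyond noting (as you already do) that the stated thresholds carry some slack relative to what the argument strictly requires.
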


\begin{proof}
	Follow the steps of the proof of Theorem 4.1 in \cite{Seybold08} and attend to the properties of matrix norms. \qed
\end{proof}
	
\begin{proposition}\label{prop-taylor-2}
	Let $\|A\|<1$ and let $\mu>0$. If there exists an integer $m$ such that 
	\begin{equation}\label{find-m}
		\frac{\|A^{m+1}\|}{\Gamma(\alpha (m+1)+\beta)}\leq \mu,
	\end{equation}
	then 
	$$R_m(A)\leq \mu \frac{\|A\|}{1-\|A\|}.$$ 
\end{proposition}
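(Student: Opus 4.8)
The plan is to bound the tail $R_m(A) = \left\|\sum_{k=m+1}^{\infty} A^k/\Gamma(\alpha k + \beta)\right\|$ by comparing it term-by-term against a geometric series whose ratio is $\|A\|$, using the hypothesis \eqref{find-m} to control the first term of the tail. First I would apply the triangle inequality and submultiplicativity of the subordinate norm to get $R_m(A) \leq \sum_{k=m+1}^{\infty} \|A\|^k/\Gamma(\alpha k + \beta)$. Then the key observation is a monotonicity property: for $k \geq m+1$, one has $\Gamma(\alpha k + \beta) \geq \Gamma(\alpha(m+1) + \beta)$, provided the arguments lie in the region where $\Gamma$ is increasing — that is, provided $\alpha(m+1) + \beta$ is at least the location $x^\ast \approx 1.4616$ of the minimum of the Gamma function on the positive reals. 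This is where the condition $m \geq \lceil (2-\beta)/\alpha + 1\rceil$ from Proposition \ref{prop-taylor-1} implicitly enters (it forces $\alpha(m+1) + \beta \geq 2 > x^\ast$); I would either cite that bound as a standing assumption on $m$ or state it explicitly, since Proposition \ref{prop-taylor-2} as written should be read in that context.

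Granting the monotonicity of $\Gamma$ along the relevant arithmetic progression, I would factor out $1/\Gamma(\alpha(m+1)+\beta)$ and write
$$
R_m(A) \leq \frac{1}{\Gamma(\alpha(m+1)+\beta)} \sum_{k=m+1}^{\infty} \|A\|^k = \frac{\|A\|^{m+1}}{\Gamma(\alpha(m+1)+\beta)} \cdot \frac{1}{1-\|A\|},
$$
where the geometric series converges because $\|A\| < 1$. Finally I would invoke the hypothesis \eqref{find-m}, namely $\|A^{m+1}\|/\Gamma(\alpha(m+1)+\beta) \leq \mu$, together with $\|A^{m+1}\| \leq \|A\|^{m+1}$ — or more directly $\|A\|^{m+1}/\Gamma(\alpha(m+1)+\beta) \leq \|A^{m+1}\|^{?}$ — to replace the leading factor by $\mu$, yielding $R_m(A) \leq \mu\,\|A\|/(1-\|A\|)$. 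A small care point: \eqref{find-m} is stated with $\|A^{m+1}\|$ in the numerator, whereas the geometric bound naturally produces $\|A\|^{m+1}$; since $\|A\|^{m+1} = \|A\|\cdot\|A\|^m$ and $\|A^{m+1}\| \le \|A\|^{m+1}$, I would route the estimate so that the quantity actually bounded by $\mu$ appears — i.e. pull out one factor of $\|A\|$ first, $\sum_{k=m+1}^\infty \|A\|^k = \|A\|\sum_{j\ge 0}\|A\|^{m+1+j}$ after re-indexing, and then bound each $\|A\|^{m+1+j}/\Gamma(\alpha k+\beta)$ appropriately.

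The main obstacle is the monotonicity step: $\Gamma$ is \emph{not} increasing on all of $(0,\infty)$, so one genuinely needs $\alpha(m+1)+\beta \gtrsim 1.4616$ for $\Gamma(\alpha k+\beta)$ to be nondecreasing in $k$ over the tail range $k \ge m+1$. Everything else — triangle inequality, submultiplicativity, summing a geometric series, and substituting \eqref{find-m} — is routine. I would therefore spend the bulk of the write-up making the hypothesis on $m$ (inherited from Proposition \ref{prop-taylor-1} or stated afresh) explicit and verifying that it places all the Gamma-function arguments $\alpha k + \beta$, $k \ge m+1$, to the right of the minimizer of $\Gamma$.
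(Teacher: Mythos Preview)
Your plan has a genuine gap at exactly the point you yourself flag as a ``small care point'': after bounding the tail termwise by $\sum_{k\ge m+1}\|A\|^{k}/\Gamma(\alpha k+\beta)$ and summing the geometric series, you arrive at the factor $\|A\|^{m+1}/\Gamma(\alpha(m+1)+\beta)$, but the hypothesis \eqref{find-m} controls $\|A^{m+1}\|/\Gamma(\alpha(m+1)+\beta)$. Since submultiplicativity gives only $\|A^{m+1}\|\le \|A\|^{m+1}$, the inequality runs the wrong way and you cannot replace $\|A\|^{m+1}$ by something bounded by $\mu\,\Gamma(\alpha(m+1)+\beta)$. Your proposed ``re-routing'' (pulling out one factor of $\|A\|$ and re-indexing) does not change this: every version of the estimate that first passes from $\|A^{k}\|$ to $\|A\|^{k}$ loses the hypothesis irretrievably.

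The paper's proof avoids this by \emph{not} bounding $\|A^{k}\|$ by $\|A\|^{k}$. Instead it keeps $\|A^{m+k}\|$ intact and uses the factorization
\[
\|A^{m+k}\|\;\le\;\|A\|^{\,k-1}\,\|A^{m+1}\|,
\]
together with the monotonicity $\Gamma(\alpha(m+1)+\beta)\le \Gamma(\alpha(m+k)+\beta)$, to obtain
\[
\frac{\|A^{m+k}\|}{\Gamma(\alpha(m+k)+\beta)}\;\le\;\|A\|^{\,k-1}\,\frac{\|A^{m+1}\|}{\Gamma(\alpha(m+1)+\beta)}\;\le\;\|A\|^{\,k-1}\mu,
\]
after which one sums in $k$. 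This is the missing idea: factor $A^{m+k}=A^{m+1}A^{k-1}$ \emph{before} taking norms, so that the quantity $\|A^{m+1}\|$ --- the one actually controlled by \eqref{find-m} --- appears explicitly. Your observation about needing $\Gamma$ to be increasing on the relevant range is correct and applies equally to the paper's argument (which uses it without comment).
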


\begin{proof}
First, we note that $\|A^{m+2}\|\leq \|A\|\|A^{m+1}\|$ and $\Gamma(\alpha (m+1)+\beta)<\Gamma(\alpha (m+2)+\beta)$. Since, by assumption, $\|A^{m+1}\| \leq \mu\,\Gamma(\alpha (m+1)+\beta)$, the following is valid:
$$\frac{\|A^{m+2}\|}{\Gamma(\alpha (m+2)+\beta)}\leq \frac{\|A\|\|A^{m+1}\|}{\Gamma(\alpha (m+2)+\beta)}\leq \|A\|\frac{\|A^{m+1}\|}{\Gamma(\alpha (m+1)+\beta)}\leq \|A\| \mu.$$
By induction, it is easy to show that, for any integer $k>1$,
$$\frac{\|A^{m+k}\|}{\Gamma(\alpha (m+k)+\beta)}\leq \|A\|^{k-1} \mu.$$
Now, attending that $\|A\|<1$, the result follows from
\begin{eqnarray*}
R_m(A) & =& \left\|\sum_{k=m+1}^{\infty}\frac{A^{k}}{\Gamma(\alpha k+\beta)}\right\|\\
&\leq &\sum_{k=m+1}^{\infty}\frac{\left\|A^{k}\right\|}{\Gamma(\alpha k+\beta)}\\
& \leq& \mu \left(\|A\|+\|A\|^2+\cdots\right)\\
&=& \mu \frac{\|A\|}{1-\|A\|}.
\end{eqnarray*}
\qed
\end{proof}

One important point of the previous result is the possibility of estimating the absolute error \eqref{err-taylor} using the norm of the terms of the Taylor series, so that the smallest $m$ satisfying \eqref{find-m} gives the number of terms to be considered in the truncation of the Taylor series. 

The next result shows that the Taylor series \eqref{ml-function} may be used for numerical computations even when $\|A\|\geq 1$.

\begin{proposition}\label{prop-taylor-3}
	Let $m$ be a natural number and let $a\in\mathbb{R}^+$ such that
	 $\Gamma(\alpha k+\beta) \geq a^k$, for all integer $k\geq m$. If $b:=\|A\|/a<1$, then 
	 \begin{equation}\label{find-m-1}
	 R_m(A)\leq \frac{b^{m+1}}{1-b}.
	 \end{equation}
	 
\end{proposition}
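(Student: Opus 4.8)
The plan is to bound the tail $R_m(A)$ term by term using the triangle inequality and submultiplicativity of the subordinate norm, exactly as in the proof of Proposition \ref{prop-taylor-2}, but this time exploiting the hypothesis $\Gamma(\alpha k + \beta) \ge a^k$ directly rather than a ratio condition. First I would write
\[
R_m(A) = \left\| \sum_{k=m+1}^{\infty} \frac{A^k}{\Gamma(\alpha k + \beta)} \right\| \le \sum_{k=m+1}^{\infty} \frac{\|A^k\|}{\Gamma(\alpha k + \beta)} \le \sum_{k=m+1}^{\infty} \frac{\|A\|^k}{\Gamma(\alpha k + \beta)},
\]
where the last step uses $\|A^k\| \le \|A\|^k$. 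Since every index $k$ in this sum satisfies $k \ge m+1 > m$, the hypothesis gives $\Gamma(\alpha k + \beta) \ge a^k$ for each such $k$, hence $\|A\|^k / \Gamma(\alpha k + \beta) \le (\|A\|/a)^k = b^k$.

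Then I would substitute and recognize the resulting bound as a geometric tail:
\[
R_m(A) \le \sum_{k=m+1}^{\infty} b^k = \frac{b^{m+1}}{1-b},
\]
which converges precisely because $b < 1$ by assumption, yielding \eqref{find-m-1}. This is the entire argument; no induction is needed here (unlike in Proposition \ref{prop-taylor-2}), because the lower bound on the Gamma values is assumed uniformly for all $k \ge m$ rather than bootstrapped from a single starting index.

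There is essentially no serious obstacle — the proof is a two-line estimate. The only point requiring a moment's care is making sure the index range matches the hypothesis: the tail starts at $k = m+1$, and the hypothesis $\Gamma(\alpha k + \beta) \ge a^k$ is assumed for all integers $k \ge m$, so in particular it holds throughout the summation range; I would state this explicitly to keep the argument airtight. One might also remark that such an $a$ exists whenever $\alpha$ and $\beta$ are large enough relative to the desired starting index $m$, since $\Gamma(\alpha k + \beta)$ grows super-exponentially in $k$, which is the practical content linking this bound to the discussion following the proposition; but that remark is not part of the formal proof.
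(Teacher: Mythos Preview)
Your proof is correct and follows essentially the same route as the paper's own proof: bound the tail termwise via the triangle inequality and $\|A^k\|\le\|A\|^k$, apply the hypothesis $\Gamma(\alpha k+\beta)\ge a^k$ for $k\ge m+1$, and sum the resulting geometric series. Your explicit remark that the summation index range $k\ge m+1$ falls within the hypothesis range $k\ge m$ is a nice touch of care that the paper leaves implicit, but otherwise the arguments are identical.
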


\begin{proof}
Since, by assumption, there exists a positive integer number $m$ such that $\Gamma(\alpha k+\beta) \geq a^k$, for all integer $k\geq m$, and $b=\|A\|/a<1$, we have 
\begin{eqnarray*}
		R_m(A) &=&  \left\|E_{\alpha,\beta}(A)-\sum_{k=0}^{m}\frac{A^{k}}{\Gamma(\alpha k+\beta)}\right\|\\		
		& =& \left\|\sum_{k=m+1}^{\infty}\frac{A^{k}}{\Gamma(\alpha k+\beta)}\right\|\\
		&\leq &\sum_{k=m+1}^{\infty}\frac{\left\|A^{k}\right\|}{\Gamma(\alpha k+\beta)}\\
		& \leq& \sum_{k=m+1}^{\infty}\left(\frac{\|A\|}{a}\right)^k\\
		&=& \sum_{k=m+1}^{\infty} b^k\\
		&=& \frac{b^{m+1}}{1-b}.
	\end{eqnarray*}
	\qed 
\end{proof}

Now, we shall discuss how to use the series \eqref{ml-function} in the computation of $E_{\alpha,\beta}(A)$. An important point is that the norm of $\|A\|$ must be small enough; otherwise, $\|A^{k+1}\|$ may overflow, even for a small integer $k$, and catastrophic cancellation may occur when adding up the terms of the series. Another point is the possibility of the values of the Gamma function in the denominators of \eqref{ml-function} overflow too. It is known that in IEEE-754 double precision arithmetic the Gamma function
can be evaluated only for arguments smaller than or equal to $171.624$, so the maximum number of terms we can use in \eqref{ml-function} is given by (see \cite{Garrappa18})
$$m_{\max} := \left\lfloor\frac{171.624-\beta}{\alpha}\right\rfloor,$$
where $\lfloor.\rfloor$ denotes the floor of a number. 

For a practical implementation of Propositions \ref{prop-taylor-1}, \ref{prop-taylor-2} and \ref{prop-taylor-3}, we find more interest in the result of Proposition \ref{prop-taylor-3}, because it does not require the condition $\|A\|<1$.

In Algorithm \ref{algorithm1} we describe how to implement the result of Proposition \ref{prop-taylor-3} in a practical way. The main issue is in how to find an appropriate $a$ such that 
$\Gamma(\alpha m+\beta) \geq a^m$ holds for a small enough $m$. It is not convenient to use a large number of terms in the series. Since $ab=\|A\|\leq a$,
the value of $a$ is bounded below by $\|A\|$. However, it is not recommended to take  $\displaystyle a\gtrapprox\|A\|$ because $\displaystyle b\lessapprox 1$, which would lead to a poor estimate for the error in \eqref{find-m-1}. Another issue that must be considered is that the norm of $A$ needs to be bounded, according to the values of $\alpha$ and $\beta$, to guarantee convergence. To find such a bound, we will require that the norm of the term in \eqref{ml-function} corresponding to $m_{\max}$ is small enough, that is,
\begin{equation}\label{small-eta}
\frac{\|A\|^{m_{\max}}}{\Gamma(\alpha\,m_{\max}+\beta)}<\eta,
\end{equation}
where $\eta$ is a small number, say, $\eta=2^{-53}$. 
From \eqref{small-eta}, it follows
\begin{equation}\label{bound-norm}
	\|A\|<\left[\eta\Gamma\left(\alpha\,m_{\max}+\beta\right)\right]^{1/m_{\max}}.
\end{equation}
Hence, if $\|A\|$ does not meet the condition \eqref{bound-norm}, it is not guaranteed that \eqref{ml-function} will be appropriate for computing the ML function.

In Algorithm \ref{algorithm1}, we choose $a=2\|A\|$, which, according to a set of numerical experiments we have carried out, gives a good compromise between a small value for $m$ and a good error estimate in \eqref{find-m-1}. If the conditions in Step 9 are not satisfied, our recommendation is that Taylor series may give inaccurate results. This might happen in particular when $\alpha$ is small (say, $0<\alpha<1$) or $\|A\|$ is large. 

{\rm \begin{algorithm}\label{algorithm1}
 (Taylor series) Given $\alpha$ and $\beta$ positive real numbers, $A\in\mathbb{C}^{n\times n}$ and a tolerance $\epsilon$, this algorithm decides if the Taylor series \eqref{ml-function} is suitable for approximating $E_{\alpha,\beta}(A)$ and, if so, finds the number of terms that should be considered in the series. 
 
 \begin{enumerate}
\item $m_{\max} = \left\lfloor\frac{171.624-\beta}{\alpha}\right\rfloor$;
\item $norm\_max = (\epsilon\Gamma(\alpha \,m_{\max}+\beta))^{1/m_{\max}}$;
\item $ a = 2\|A\|$;
\item $ b = 1/2$;
\item $m = 1:m_{\max}$;
\item $ d = \Gamma(\alpha m+\beta)-a.\,\,\hat{}\,\, m$;
\item $ k_1 = find(d>0,1)$;
\item $ k_2 = \lceil \log(\epsilon(1-b))/\log(b)-1\rceil$ (see \eqref{find-m-1} );
\item  {\bf if} $\|A\|\leq norm\_max$ and $k_1$ is non-empty and $k_1\leq k_2$
\item $\quad$ Use Taylor series \eqref{ml-function} with $k_2$ terms for approximating $E_{\alpha,\beta}(A)$; 
\item {\bf else} Taylor series is not recommended for evaluating $E_{\alpha,\beta}(A)$.
\item {\bf end}
\end{enumerate}
\end{algorithm}
}
  
Provided that the conditions in Step 9 of Algorithm \ref{algorithm1} are satisfied, there is the question of how to evaluate the Taylor polynomial of degree $k_2$. There are three main methods for polynomial evaluation: Horner's method, the explicit powers' method and the Paterson-Stockmeyer method. The first two methods cost about $k_2-1$ matrix multiplications, while the latter is more sophisticated and much less expensive, costing about $s+r-1-\phi(s,k_2)$ matrix multiplications, where $s=\lceil\sqrt{k_2}\rceil$ or $s=\lfloor \sqrt{k_2}\rfloor$, $r=\lfloor \sqrt{k_2/s}\rfloor$ and $\phi(s,k_2)=1$ if $s$ divides $k_2$ and $0$ otherwise. In the method to be proposed later in Section \ref{sec-experiments} we will use the Paterson-Stockmeyer method.
For more information on evaluating matrix polynomials, see \cite[Ch.4]{Higham08}; for Paterson-Stockmeyer method see also \cite[Ch.4]{Higham08}, and \cite{Fasi,Paterson}.

\section{Finding a Suitable Contour}\label{contour}

In this section we assume throughout that $T$ is an atomic block (i.e., an upper triangular matrix with close eigenvalues) of order $n$. Our main goal is to provide a reliable method for computing the ML function of $T$ by means of the Cauchy formula
$$
E_{\alpha,\beta}(T)=\frac{1}{2\pi i}\int_{\mathcal C} E_{\alpha,\beta}(z)(zI-T)^{-1}\ dz.
$$
For the contour ${\mathcal C}$ we consider a circle with center in $z_0\in\mathbb{C}$ and radius $r>0$,
\begin{equation}\label{circle}
	{\mathcal C}=\{z\in\mathbb{C}:\ z=z_0+re^{ti}, \ t\in [0,2\pi[\},
\end{equation}
enclosing the spectrum $\sigma(T)$ of $T$. After a few calculations we obtain 
\begin{equation}\label{cauchy-2}
E_{\alpha,\beta}(T)=\frac{1}{2\pi}\int_0^{2\pi} re^{ti}E_{\alpha,\beta}(z_0+re^{ti})
\left((z_0+re^{ti})I-T\right)^{-1}\ dt.
\end{equation}
For the center of the circle we set 
\begin{equation}\label{z0}
	z_0 = \frac{\trace(T)}{n},
\end{equation}
that is, $z_0$ is the arithmetic mean of the eigenvalues of $T$. The value of the radius $r$ must satisfy
\begin{equation}\label{d}
	r\,>\,d:=\max\{|\lambda-z_0|:\ \lambda\in \sigma(T)\};
\end{equation}
see Figure \ref{fig-4}.

\begin{center}
	\begin{figure}[t]
		\centering
		\includegraphics[width=0.30\textwidth]{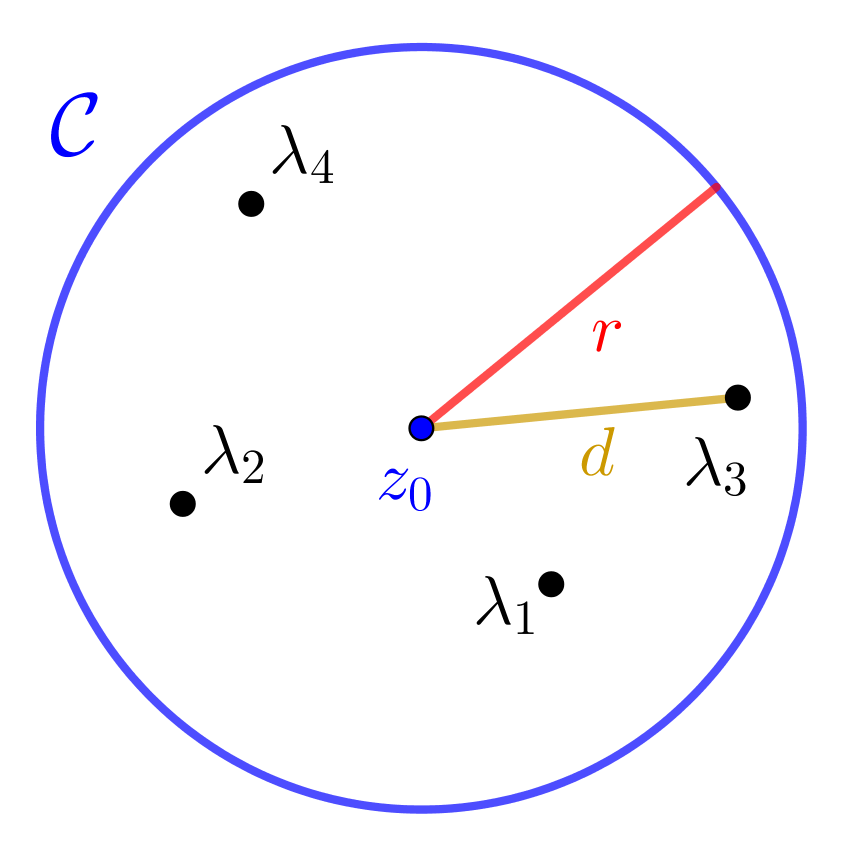}
		\caption{\small Geometric representation of $d$ and $r$ in \eqref{d}, for a matrix $T$ with spectrum $\sigma(T) = \{\lambda_1,\, \lambda_2,\, \lambda_3,\, \lambda_4\}$. }
		\label{fig-4}
	\end{figure}
\end{center}

Since we are interested in evaluating the integral in \eqref{cauchy-2} by quadrature, some care must be taken in the choice of the radius $r$. If it is very close to $d$, the circle passes too close the eigenvalues of $T$ and the value of the norm of the resolvent $\left\|\left((z_0+re^{ti})I-T\right)^{-1}\right\|$ is large, while the value of $E_{\alpha,\beta}(z_0+re^{ti})$ may be moderate; if $r$ is large so that the circle stays far away from the eigenvalues of $T$, the value of $\left\|\left((z_0+re^{ti})I-T\right)^{-1}\right\|$ is moderate, while the values of $E_{\alpha,\beta}(z_0+re^{ti})$ may be large, especially if $0<\alpha <1$. Therefore the value of $r$ must be carefully chosen in order to avoid many oscillations in the integrand function of \eqref{cauchy-2}. Moreover, it is expected that $r$ should minimize the quadrature error arising in the approximation of \eqref{cauchy-2}. 

Let 
\begin{equation}\label{integrand}
g(t):=re^{ti}E_{\alpha,\beta}(z_0+re^{ti})\left((z_0+re^{ti})I-T\right)^{-1}
\end{equation}
be the integrand function in \eqref{cauchy-2}. Note that $g:\mathbb{R}\longrightarrow \mathbb{C}^{n\times n}$, where $n$ is the order of $T$, is of class $C^\infty$ (we are assuming that the eigenvalues of $T$ are located in the interior of the circle). Since $g$ is periodic, we follow the recommendations of \cite{Trefethen} and use the composite trapezoidal rule (or just trapezoidal rule, for short).
Let 
\begin{equation}\label{trapez-1}
{\mathcal T}_m(g):=\frac{2\pi}{m}\sum_{k=1}^{m}g(t_k),
\end{equation}
denote the $(m+1)$-point trapezoidal rule applied to $\int_0^{2\pi}g(t)\ dt$. Here, $t_0,t_1,\ldots,t_m$ are equally spaced points partitioning the interval $[0,\,2\pi]$, with $t_0=0$ and $t_m=2\pi$. The point $t_0$ does not appear explicitly in \eqref{trapez-1} because $g(t_0)=g(t_m)$. Let 
\begin{equation}\label{eps-T}
	\epsilon_T:=\left\|\int_0^{2\pi}g(t)\ dt-{\mathcal T}_m(g)\right\|
\end{equation}
denote the error of the $(m+1)$-point trapezoidal rule. In the literature we can find two important bounds for such an error: 
\begin{equation}\label{err-1}
	\epsilon_T \leq \frac{8\pi^3}{12m^2}\,M_1
\end{equation}
(see \cite[Sec. 4]{Cardoso}),
and 
\begin{equation}\label{err-2}
\epsilon_T \leq \frac{4\pi \,\zeta(2s+1)}{m^{2s+1}}\,M_2
\end{equation}
where $\displaystyle M_1:=\max_{t\in [0,\,2\pi]}\|g''(t)\|$, $\displaystyle M_2:= \max_{t\in [0,2\pi]}\|g^{(2s+1)}(t)\|$  and $\zeta(.)$ is the Riemann Zeta function (see \cite{Davies05,Davis84}). The bound in \eqref{err-2} is valid because $g$ is periodic, while \eqref{err-1} does not require the periodicity of $g$. Of course, the bound \eqref{err-2} is in general sharper than \eqref{err-1}, but the expression of the derivatives $g^{(k)}(t)$ becomes much complicated as $k$ increases. Using Leibniz's rule and Fa\`a di Bruno's formula, it is possible to find a closed expression for $g^{(k)}(t)$, but its computational cost for each $t$ increases with $k$ as well.

To illustrate, we write down the expression of $g''(t)$. Attending that $g$ is the product of three functions, that is, $g(t)=g_1(t)g_2(t)g_3(t)$, where
\begin{eqnarray}
	g_1(t)&:=&re^{ti}\label{g1}\\
	g_2(t)&:=&E_{\alpha,\beta}(z_0+re^{ti})\label{g2}\\
	g_3(t)&:=&\left((z_0+re^{ti})I-T\right)^{-1},\label{g3}
\end{eqnarray}
the second order derivative is given by
\begin{equation}\label{der-g-2}
g''(t)=\left[g_1(t)g''_2(t)+2g'_1(t)g'_2(t)+g_1(t)g''_2(t)\right]g_3(t)+
2\left[g'_1(t)g_2(t)+g_1(t)g'_2(t)\right]g'_3(t)+g_1(t)g_2(t)g''_3(t),
\end{equation}
where
\begin{eqnarray*}
	g'_1(t)&=& rie^{ti}\\
	g''_1(t)&=& ri^2e^{ti}\\
	g'_2(t)&=& \frac{dE_{\alpha,\beta}(z)}{dz}(rie^{ti})\\
	g''_2(t)&=& \frac{d^2E_{\alpha,\beta}(z)}{dz^2}(rie^{ti})^2+ \frac{dE_{\alpha,\beta}(z)}{dz}(ri^2e^{ti})\\
	g'_3(t)&=&-rie^{ti}(zI-T)^{-2}\\
	g''_3(t)&=&re^{ti}(zI-T)^{-2}+2\left(rie^{ti}\right)^2(zI-T)^{-3},
\end{eqnarray*}
with $z=z_0+re^{ti}$. If we go up to the third order derivative, we arrive at a much more complicated expression, so in the following developments we will use \eqref{err-1} instead of \eqref{err-2} and focus our investigation on how to find an $r$ that minimizes \eqref{err-1}.

Mathematically, the problem can be formulated as a min-max problem 
\begin{equation}\label{min-max}
	\tilde{r}=\argmin_{r>d}\ \max_{t\in [0,2\pi]} \|g''(r,t)\|,
\end{equation}
where $d$ is defined in \eqref{d}, that is, $d$ is the maximal distance between $z_0$ and the eigenvalues of $T$ (see \eqref{z0} and also Figure \ref{fig-4}).
For convenience, from now on we highlight the dependence of $g''$ not only on $t$, but also on $r$. Since a rough estimate for $\tilde{r}$ is enough, we can solve \eqref{min-max} by considering a vector of values $r\in ]d,3]$ and then finding the maximal $t$ corresponding to each $r$. At the end, we choose an $r$ corresponding to the  minimum of the maximal values of $t$; this will be the radius of the contour circle.

%
%

Solving the optimization problem \eqref{min-max} requires many evaluations of  $g''(r,t)$, at the cost of $O(n^3)$ operations per evaluation. Note that $g''(r,t)$  involves the inversion of a triangular matrix in $\left((z_0+re^{ti})I-T\right)^{-k}$, for $k=1,2$, at the cost of $O(n^3)$ operations, while the functions $g_1$ and $g_2$ (and their derivatives) are scalar valued. 

To make each function evaluation cheaper and more feasible, in the following proposition we derive an upper bound for $\left\|\left((z_0+re^{ti})I-T\right)^{-k}\right\|_F$, where $\|.\|_F$ denotes the Frobenius norm.

\begin{proposition}
Given an $n\times n$ complex upper triangular matrix $T$, let $N$ be its strictly upper triangular part, that is, $T=\diag(\lambda_i)+N$. Let ${\mathcal C}$ be the circle with center in $z_0=\trace(T)/n$ and radius $r$ and let $\widetilde{\mathcal C}$ be the closed disk with center in $z_0$ and radius $\delta$,  such that $r>\delta>d=\max\{|\lambda-z_0|:\ \lambda\in \sigma(T)\}$. Then, for all $t\in [0,\,2\pi]$ and $k=1,2,\ldots$,
\begin{equation}\label{bound-g3}
	\left\|\left((z_0+re^{ti})I-T\right)^{-k}\right\|_F 
	\leq c\, \|(I-|N|)^{-1}\|_F,
\end{equation}
where 
$$c:=\max\left\{\frac{(k+j-1)!}{j!(k-1)!}(r-\delta)^{-k-j}:\ j=0,1,\ldots,n-1\right\}$$ and $|N|$ denotes the matrix formed by the absolute values of the entries of $N$.
\end{proposition}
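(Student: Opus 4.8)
The plan is to express the inverse matrix $\left((z_0+re^{ti})I-T\right)^{-1}$ explicitly using the decomposition $T=D+N$, where $D=\diag(\lambda_i)$ and $N$ is strictly upper triangular. Writing $z=z_0+re^{ti}$ and $M:=zI-D=\diag(z-\lambda_i)$, we have $zI-T=M-N=M(I-M^{-1}N)$, so that $(zI-T)^{-1}=(I-M^{-1}N)^{-1}M^{-1}=\left(\sum_{j=0}^{n-1}(M^{-1}N)^j\right)M^{-1}$, the Neumann series terminating after $n-1$ terms because $M^{-1}N$ is strictly upper triangular and hence nilpotent of index at most $n$. For the $k$-th power one differentiates the resolvent identity, or more directly uses $(zI-T)^{-k}=\frac{(-1)^{k-1}}{(k-1)!}\frac{d^{k-1}}{dz^{k-1}}(zI-T)^{-1}$; I would instead work entrywise, since $(zI-T)^{-k}$ has entries that are polynomials in the entries of $N$ divided by products of the $(z-\lambda_i)$, each such product containing exactly $k+j$ factors when the term involves a length-$j$ path through $N$.

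Next I would bound things entrywise in absolute value. Since every eigenvalue satisfies $|\lambda_i-z_0|\le d<\delta<r$, and $z$ lies on the circle of radius $r$ about $z_0$, the triangle inequality gives $|z-\lambda_i|\ge |z-z_0|-|z_0-\lambda_i| = r-|z_0-\lambda_i| \ge r-d > r-\delta$ for every $i$ and every $t$. Hence each factor $|z-\lambda_i|^{-1}$ is bounded by $(r-\delta)^{-1}$. The key combinatorial observation is that the $(p,q)$ entry of $(zI-T)^{-k}$ is a sum over the relevant paths, and taking absolute values entry-by-entry one obtains the termwise domination $\left|\left((zI-T)^{-k}\right)_{pq}\right| \le \sum (\text{products of }|N|\text{-entries}) \cdot (\text{multiplicities}) \cdot (r-\delta)^{-(k+j)}$, which is exactly the $(p,q)$ entry of $\sum_{j=0}^{n-1}\binom{k+j-1}{j}(r-\delta)^{-k-j}\,|N|^j$ — the binomial coefficient $\binom{k+j-1}{j}=\frac{(k+j-1)!}{j!(k-1)!}$ being the number of ways a length-$j$ product of $M^{-1}N$ blocks can be distributed among $k$ resolvent factors. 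Pulling out the maximum over $j$ of $\binom{k+j-1}{j}(r-\delta)^{-k-j}$, which is the constant $c$ in the statement, leaves a termwise bound by $c\sum_{j=0}^{n-1}|N|^j = c(I-|N|)^{-1}$, where the Neumann series again terminates because $|N|$ is strictly upper triangular and nilpotent, and $(I-|N|)^{-1}$ has nonnegative entries.

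Finally I would transfer the entrywise bound to the Frobenius norm. If $|B_{pq}|\le C_{pq}$ for all $p,q$ with $C$ a nonnegative matrix, then $\|B\|_F = \left(\sum_{p,q}|B_{pq}|^2\right)^{1/2} \le \left(\sum_{p,q}C_{pq}^2\right)^{1/2} = \|C\|_F$; applying this with $B=(zI-T)^{-k}$ and $C=c\,(I-|N|)^{-1}$ yields $\left\|\left((z_0+re^{ti})I-T\right)^{-k}\right\|_F \le c\,\|(I-|N|)^{-1}\|_F$, uniformly in $t\in[0,2\pi]$, which is precisely \eqref{bound-g3}.

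I expect the main obstacle to be making the combinatorial bookkeeping rigorous: one must carefully verify that expanding $(zI-T)^{-k} = \left(\sum_{j\ge 0}(M^{-1}N)^j M^{-1}\right)^{k}$ and collecting all contributions in which $N$ appears exactly $j$ times produces precisely the coefficient $\binom{k+j-1}{j}$ (a stars-and-bars count of how $j$ factors of $M^{-1}N$ distribute into $k$ slots), while simultaneously tracking that every such term carries $k+j$ factors of the form $(z-\lambda_\bullet)^{-1}$, each dominated by $(r-\delta)^{-1}$. An alternative, slicker route that avoids some of this bookkeeping is to note $\|(zI-T)^{-k}\|_F \le \|(|z-\lambda|I-|N|)^{-k}\|_F$ by comparison of the entrywise-dominating nonnegative matrices (since $|M^{-1}|\le (r-\delta)^{-1}I$ entrywise), then bound $(r-\delta)^{-1}/(1-(r-\delta)^{-1}|N|)$ raised to the $k$; but reconciling that with the stated constant $c$ still requires the same binomial estimate, so I would present the direct path-counting argument.
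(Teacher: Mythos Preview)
Your approach is correct but genuinely different from the paper's. The paper does not expand the resolvent power by hand; instead it applies a ready-made bound for functions of triangular matrices, namely \cite[Thm.~9.2.2]{Golub} (equivalently \cite[Thm.~4.28]{Higham08}), which states that for $f$ analytic on a region containing $\sigma(T)$ one has $\|f(T)\|_F\le\bigl(\max_{0\le j\le n-1}\omega_j/j!\bigr)\,\|(I-|N|)^{-1}\|_F$ with $\omega_j=\sup|f^{(j)}|$. The paper takes $f=\phi_k(x)=(z(t)-x)^{-k}$ on the closed disk $\widetilde{\mathcal C}$, computes $\phi_k^{(j)}(x)=\frac{(-1)^j(k+j-1)!}{(k-1)!\,(z-x)^{k+j}}$, and bounds $|z(t)-x|\ge r-\delta$ for $x\in\widetilde{\mathcal C}$ to obtain $\omega_j/j!\le\binom{k+j-1}{j}(r-\delta)^{-k-j}$, whence $c$.

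Your Neumann-series and stars-and-bars argument is in effect a direct, self-contained proof of the special case of that theorem needed here: the binomial coefficient $\binom{k+j-1}{j}$ you obtain by counting weak compositions of $j$ into $k$ parts is exactly the $j$th Taylor coefficient of $\phi_k$ (up to the $(r-\delta)^{-k-j}$ factor), so the two routes inevitably converge on the same constant. The paper's route is shorter and cleaner because it outsources the combinatorics to a cited result; your route is more elementary and avoids any external dependence, at the price of the bookkeeping you already flagged. Either is acceptable; if you want to streamline, you could simply cite the Golub--Van Loan/Higham bound and then just differentiate $(z-x)^{-k}$, which collapses your argument to a few lines.
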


\begin{proof}
	Let us consider the scalar complex valued function $\phi_k(x):=\frac{1}{(z(t)-x)^k}$, where $k=1,2,\ldots$ and $z(t):= z_0+re^{ti}$. It is easy to check that $\phi_k(x)$ is analytic on the closed disk $\widetilde{\mathcal C}$. Note that if $\lambda$ is an eigenvalue of $T$, then $z(t)-\lambda\neq 0, \ \forall t\in[0,\,2\pi]$, that is, the denominator of $\phi_k(\lambda)$ does not vanish. Using the result \cite[Thm.9.2.2]{Golub} (see also \cite[Thm.4.28]{Higham08}), 
	we have
	\begin{equation}\label{ineq-1}
		\left\|\left((z_0+re^{ti})I-T\right)^{-k}\right\|_F
		\leq  \max_{0\leq j\leq n-1}\frac{\omega_j}{j!}\,\|(I-|N|)^{-1}\|_F,
		\end{equation}
	where $ \displaystyle\omega_j:=\sup_{x\in \widetilde{\mathcal C}}|\phi_k^{(j)}(x)|.$
		Noticing that 
		$\phi_k^{(j)}(x)=\frac{(-1)^j (k+j-1)!}{(k-1)!\,(z-x)^{k+j}}$ and that $|z(t)-x|\geq r-\delta$, for any $x\in \widetilde{\mathcal C}$ and $t\in [0,\,2\pi]$, we have 
		$$ \frac{\omega_j}{j!} =\sup_{x\in \widetilde{\mathcal C}} \frac{(k+j-1)!}{j!(k-1)!\,|z-x|^{k+j}} \leq \frac{(k+j-1)!}{j!(k-1)!\,(r-\delta)^{k+j}},$$
		for each $j$ and $k$.
			Hence
			$$\left\|\left(z(t)I-T\right)^{-k}\right\|_F 
		  \leq \max\left\{\frac{(k+j-1)!}{j!(k-1)!}(r-\delta)^{-k-j}:\ j=0,1,\ldots,n-1\right\}\,\|(I-|N|)^{-1}\|_F,$$
		 which proves the result. \qed
\end{proof}
	
From \eqref{der-g-2} and \eqref{bound-g3}, we have
\begin{eqnarray}
	\|g''(r,t)\|_F &\leq&  \Big(\left|g_1(t)g''_2(t)+2g'_1(t)g'_2(t)+g_1(t)g''_2(t)\right|\gamma_1+ 2r\left|g'_1(t)g_2(t)+g_1(t)g'_2(t)\right|\gamma_2+\nonumber\\
	&&	r\left|g_1(t)g_2(t)\right|\left(\gamma_2+2r\gamma_3\right)\Big)\gamma,\label{bound-der-2}
\end{eqnarray}
where
\begin{eqnarray*}
	\gamma &:=& \|(I-|N|)^{-1}\|_F; \\
	\gamma_1 &:=& \max\left\{(r-\delta)^{-j}:\ j=1,2,\ldots,n\right\};\\
	\gamma_2 &:=& \max\left\{j(r-\delta)^{-j-1}:\ j=1,2,\ldots,n\right\}; \\
	\gamma_3 &:=& \max\left\{\frac{j(j+1)}{2}(r-\delta)^{-j-2}:\ j=1,2,\ldots,n\right\}.	
\end{eqnarray*}

Using the bound \eqref{bound-der-2} for estimating $\|g''(r,t)\|_F$ avoids the computation of the factor \linebreak $\left\|\left((z_0+re^{ti})I-T\right)^{-1}\right\|_F$, that requires $O(n^3)$ operations, for each $t$, making the estimation of  $\|g''(r,t)\|_F$ much cheaper: the factor $\|(I-|N|)^{-1}\|_F$ does not depend on either $r$ or $t$ and can be discarded. Hence, the operations needed to estimate a solution of \eqref{min-max} just involve scalars. 


\section{Algorithm}\label{sec-algorithm}

Before stating the algorithm for computing the ML function of a general matrix $A$ (Algorithm \ref{algorithm3}), we shall revisit the blocked Schur-Parlett method with reordering and explain in detail how to efficiently implement  the trapezoidal rule.

\subsection{Schur-Parlett Approach}\label{schur-parlett}
In the following, we summarize the main steps of the blocked Schur-Parlett method with reordering proposed in \cite{Davies03}, which codes are available in \cite{mftoolbox}; see also \cite[Ch. 9]{Higham08}. It starts by computing a Schur decomposition 
$A=UTU^\ast$, where $U$ is unitary and 
\begin{equation} \label{matrizT}
	T=\left[
	\begin{array}{cccc}
		T_{11}&T_{12}&\cdots&T_{1q}\\
		0&T_{22}&\cdots&T_{2q}\\\vdots&\ddots&\ddots&\vdots\\0&\cdots&0&T_{qq}
	\end{array}
	\right]\ \in\mathbb{C}^{n\times n},
\end{equation} is a $(q\times q)$-block-upper triangular matrix. Each atomic block $T_{ii}\ (i=1,\cdots,q)$  has clustered eigenvalues and the $q$ atomic blocks in the diagonal do not have common eigenvalues, that is, 
\begin{equation}\label{espectroT}
\sigma(T_{ii})\cap\sigma(T_{jj})=\emptyset,\ i,j=1,\ldots,q,\ i\neq j.
\end{equation}
Let us denote 
\begin{equation}\label{L1}
	F:=E_{\alpha,\beta}(T)=\left[
	\begin{array}{cccc}
		F_{11}&F_{12}&\cdots&F_{1q}\\
		0&F_{22}&\cdots&F_{2q}\\\vdots&\ddots&\ddots&\vdots\\0&\cdots&0&F_{qq}
	\end{array}
	\right], \end{equation} where $F_{ij}$ has the same size as $T_{ij}\ (i,j=\,\ldots,q)$. Recall that the diagonal blocks of $F$ are given by
$F_{ii}=E_{\alpha,\beta}(T_{ii})$. Since $FT=TF$, it can be shown that 
\begin{equation}\label{parlett2}
	F_{ij}T_{jj}-T_{ii}F_{ij}=T_{ij}F_{jj}-F_{ii}T_{ij}+
	\sum^{j-1}_{k=i+1}(T_{ik}F_{kj}-F_{ik}T_{kj})\quad i<j.
\end{equation}
To find the blocks of $F$, we start by computing the ML function of the atomic blocks $F_{ii}=E_{\alpha,\beta}(T_{ii})$ and then successively use  (\ref{parlett2}) to approximate the remaining blocks of $F$. Note that for each $i<j$, the identity (\ref{parlett2}) is a Sylvester equation of the form 
\begin{equation} \label{sylvester}
	XM-NX=P,
\end{equation} where $M$, $N$ and $P$ are known square matrices and $X$ has to be determined. Equation 
(\ref{sylvester}) has a unique solution if and only if $\sigma(M)\cap\sigma(N)=\emptyset$. Hence, the block-Parlett method requires the solving of several Sylvester equations with a unique solution. Recall that we are assuming that $\sigma(T_{ii})\cap\sigma(T_{jj})=\emptyset,\ i\neq j$. To avoid the ill conditioning of the Sylvester equations arising in the Parlett recurrence, the eigenvalues of the blocks $T_{ii}$ and $T_{jj}$, $i\neq j,$ need to be well separated in the following sense:
there exists $\delta>0$ (e.g., $\delta=0.1$), such that 
\begin{equation}\label{delta}
	\min\left\{|\lambda-\mu|:\ \lambda\in\sigma(T_{ii}),\ \mu\in\sigma(T_{jj}),\ i\neq j \right\} > \delta
\end{equation}
and, for every eigenvalue $\lambda$ of a block $T_{ii}$ with dimension bigger than $1$, there exists $\mu\in\sigma(T_{ii})$ such that $|\lambda-\mu|\leq \delta$. 

\subsection{Trapezoidal Rule} 

 The benefits of using the trapezoidal rule in integrals with periodic functions are clearly pointed out in \cite{Trefethen}. We highlight in particular the following three major points:
\begin{itemize}
	\item The computations of ${\mathcal T}_m(g)$ (see \eqref{trapez-1}) can be reused when the number of nodes is doubled, i. e., for computing ${\mathcal T}_{2m}(g)$;	
	\item  The error $\epsilon_T$ in \eqref{eps-T} can be estimated cheaply by
	$$\epsilon_T\leq \left\|{\mathcal T}_{2m}(g)-{\mathcal T}_{m}(g)\right\|;$$
	see \cite[Sec.2.3]{Tatsuoka} and also \cite[Sec.4]{Cardoso};
	\item If $T$ is a real matrix, the computation of ${\mathcal T}_m(g)$ can be carried out with only $\lceil m/2\rceil$ function evaluations.
\end{itemize}

For implementing the trapezoidal rule,  we consider the following algorithm.

{\rm \begin{algorithm}\label{algorithm2}
		(Trapezoidal rule) Assume that $T\in\mathbb{C}^{n\times n}$ is an upper triangular matrix and that $g(t)$ is defined as in \eqref{integrand}. 
		Let $\displaystyle{\mathcal T}_{m}(g):=\frac{2\pi}{m}\sum_{k=1}^{m}g(t_k),$ where 
		$t_0=0<t_1<\ldots<t_m=2\pi$ are equally spaced points, and let $\displaystyle{\mathcal I}_g:=\int_0^{2\pi}\,g(t)\,dt$ be the exact value of the integral.
		For a certain tolerance $\varepsilon$, this algorithm approximates the matrix integral by the trapezoidal rule and finds an $m_0$ such that the discretization error $\|{\mathcal I}_g-{\mathcal T}_{m_0}(g)\|_F \leq \varepsilon$.
		\begin{enumerate}
			\item Choose a positive integer $m$ (preferably small, say $m=10$);
			\item Compute ${\mathcal T}_{m}(g)$;
			\item Compute ${\mathcal T}_{2m}(g)$ by the following formula:
			 \begin{equation}\label{T-2m}
			{\mathcal T}_{2m}(g) = {\mathcal T}_{m}(g)+\sum_{k=0}^{m-1}g(t_{2k+1});
			\end{equation}
			\item {\bf while} $\|{\mathcal T}_{2m}(g)-{\mathcal T}_{m}(g)\|_F>\epsilon$,
			\item $\quad m\leftarrow 2m$;
			\item $\quad$Compute ${\mathcal T}_{2m}(g)$ using \eqref{T-2m};
			\item {\bf end}
			\item $m_0 \leftarrow m$;
			\item ${\mathcal I}_g\approx {\mathcal T}_{m_0}(g)$
		\end{enumerate}
	\end{algorithm}
}

To simplify, in Algorithm \ref{algorithm2} we have not exploited the advantages arising when $T$ is a real matrix. The main differences would be in the choice of $m$, that must be even, and in the way as ${\mathcal T}_{m}(g)$ is evaluated -- we would compute instead $$\widetilde{\mathcal T}_{m/2} :=\frac{\pi}{m}\left(g(0)+g(\pi)+2\sum_{k=1}^{m/2-1}g(t_k)\right),$$ where $t_0=0<t_1<\ldots<t_{m/2}=\pi$ are equally spaced points partitioning the interval $[0,\,\pi]$, and then evaluate
$${\mathcal T}_{m}=\widetilde{\mathcal T}_{m/2}+\conj\left(\widetilde{\mathcal T}_{m/2}\right);$$
here $\conj(.)$ denotes the complex conjugate.

\subsection{Main Algorithm}\label{alg-main}

{\rm \begin{algorithm}\label{algorithm3}
		(Computing the ML function) Given $\alpha$ and $\beta$ positive real numbers, and a general square complex matrix $A\in\mathbb{C}^{n\times n}$, this algorithm approximates $E_{\alpha,\beta}(A)$ using the Taylor series or a Schur-Parlett method combined with the Cauchy integral form. It is intended for working in IEEE standard double precision arithmetic.		
		\begin{enumerate}
			\item Call Algorithm \ref{algorithm1} to check if the Taylor series is suitable for computing the ML function and, if so, approximate $E_{\alpha,\beta}(A)$ by the Taylor polynomial of degree $k_2$, computed by means of the Paterson-Stockmeyer method; if the Taylor series cannot be applied, go to the next step;   
			\item Use the method of \cite{Davies03} to compute the blocked Schur decomposition of $A$ with reordering, so that  
			$A=UTU^\ast$, where $U$ is unitary and 
			$T$ is a $(q\times q)$-block-upper triangular matrix, with well separated atomic blocks $T_{kk},\ k=1,\ldots,q$ in its diagonal; choose $\delta=0.1$ for the value in \eqref{delta}. 
			\item {\bf for} $k=1:q$
			\item $\quad$ Let $n_k$ denote the order of the block $T_{kk}$;
			\item $\quad$ {\bf if} $n_k=1$, 
			 \item $\qquad$ $T_{kk}$ is a scalar so use the method of \cite{Garrappa15}; 
			\item $\quad$ {\bf endif}
			\item $\quad$ {\bf if} $n_k=2$, 
			\item $\qquad$ 
				let $T_{kk}:=[t_{11}\ t_{12};\,0\ t_{22}]$; now $F=E_{\alpha,\beta}(T_{kk})= [f_{11}\ f_{12};\,0\ f_{22}]$, where
			 	 $f_{ii}=E_{\alpha,\beta}(t_{ii})$, for $i=1,2$, 
			\item[] $\qquad$	and $f_{12}=t_{12}\frac{f_{22}-f_{11}}{t_{22}-t_{11}};$
			\item $\quad$ {\bf endif}
			\item $\quad$ {\bf if} $n_k\geq 3$
			\item $\qquad$ Set $z_k=\frac{\trace(T_{kk})}{n_k}$ and estimate the radius $r_k$ by the method of Section \ref{contour};
			\item $\qquad$ Denote 
			\begin{equation}\label{Gkk}
			G_k(t):=r_ke^{ti}E_{\alpha,\beta}(z_k+r_ke^{ti})\left((z_k+r_ke^{ti})I-T_{kk}\right)^{-1}; 
			\end{equation}
			 \item $\qquad$ Find $F_{kk}\approx \int_{0}^{2\pi}\,G_k(t)\,dt$ by the trapezoidal rule using Algorithm \ref{algorithm2};
			 \item $\quad$ {\bf endif} 
			  \item {\bf endfor}
			 \item Compute the remaining blocks of $F=2\pi\,E_{\alpha,\beta}(T)$, by the block Parlett recurrence (Section \ref{schur-parlett});
			 \item $E_{\alpha,\beta}(A)\approx \frac{1}{2\pi} U\,F\,U^\ast$.
			\end{enumerate}
	\end{algorithm}
}

Let us now make some remarks on the computational cost of Algorithm \ref{algorithm3}. If the Taylor series is used, a Taylor polynomial of degree $k_2$ is evaluated by the Paterson-Stockmeyer method whose cost has been discussed at the end of Section \ref{sec-taylor}. If the Taylor series cannot be used, a Schur decomposition is required which costs about $25n^3$ flops. Moreover, for the blocks $T_{kk}$ with size $n_k\times n_k$, where $n_k\geq 3$, the algorithm requires many evaluations of the function $G_k(t)$ given in \eqref{Gkk}, to compute the Cauchy integral by the trapezoidal rule. Each evaluation of $G_k(t)$ involves an inversion of a triangular matrix (or a multiple right-hand side linear system) which costs about $n_k^3/3$. Recall that $n_k$ is the order of the block $T_{kk}$ and that  $n_1+n_2+\cdots+n_q=n$, so in general $n_k$ is much smaller than $n$. The other factors of $G_k(t)$ are scalars. The estimation of the radius $r_k$ depends on the method used for solving the min-max problem \eqref{min-max}. Since we do not require a very accurate value for $r_k$, we can define a grid in the rectangle $[d,\tilde{d}]\times [0,\,2\pi]$ ($\tilde{d}$ is an upper bound for the admissible values for $r$; in our experiments we consider $\tilde{d}=3$), with ``large cells'' and then evaluate the bound for $\|G_k(r,t)\|_F$ given in \eqref{bound-der-2} at  each point $(r,t)$. Recall that the factor $\|(I-|N_{kk}|)^{-1}\|_F$ does not need to be computed. We consider a grid with ``large cells'' because the computation of the scalar ML function is quite expensive, if compared with functions like $\exp(z)$ or $\log(z)$.  
The cost of implementing the Schur-Parlett method is addressed in detail in \cite{Davies03} and \cite[Ch.9]{Higham08}. From these two latter references, we know that the Schur-Parlett method is forward stable, so the major source of errors in Algorithm \ref{algorithm3} arises in the computation of the Taylor polynomial or in the computation of the Cauchy integral by the trapezoidal rule, which is bounded by $\left\|{\mathcal T}_{2m}(g)-{\mathcal T}_{m}(g)\right\|$.   
If the Taylor series is suitable for evaluating the ML function, we can monitor the truncation error by the bound provided in Proposition \ref{prop-taylor-3}. 

\section{Numerical Experiments}\label{sec-experiments}

In this section we run a set of experiments in MATLAB R2022a on a machine with Core i5 (2.40 GHz, 16 GB of RAM, unit roundoff $u\approx 1.1\times 10^{-16}$) to compare the relative errors originated by Algorithm \ref{algorithm3} and the method of \cite{Garrappa18}, here named \texttt{GP-method}. The CPU time (in seconds) of both methods is compared as well. Since CPU time depends on many factors and is different each time an algorithm runs, we have run each algorithm 20 times and then computed the average CPU time to get more realistic results. 

The method in \cite{Higham21} is not considered in our experiments because it involves variable precision so we can control better the accuracy by changing the precision. However, improving the accuracy may be done at the cost of increasing the computational time and this issue is not addressed in \cite{Higham21}. Moreover, the ``Advanpix'' package required by the methods of \cite{Higham21} is not free and it is not incorporated in MATLAB's standard packages. Codes for the \texttt{GP-method} are available in \cite{Garrappa-m}. As exact value for $E_{\alpha,\beta}(A)$, we have considered the result given by the Symbolic Math Toolbox at 2000 decimal digit arithmetic rounded to double precision. Denoting by $\widetilde{E}_{\alpha,\beta}(A)$ the computed approximation, we use the relative error in the usual sense:
$$\mbox{\texttt{rel-err}}:=\frac{\|{E}_{\alpha,\beta}(A)-\widetilde{E}_{\alpha,\beta}(A)\|_F}{\|{E}_{\alpha,\beta}(A)\|_F}.$$
We recall that in \cite{Garrappa18} and also in \cite{Garrappa15} the authors consider a different formula for the relative error:
$$\mbox{\texttt{err-GP}}:=\frac{\|{E}_{\alpha,\beta}(A)-\widetilde{E}_{\alpha,\beta}(A)\|_F}{\|{E}_{\alpha,\beta}(A)\|_F+1}.$$
As discussed in \cite{Higham21}, if $\|E_{\alpha,\beta}(A)\|\gg 1$,  $\mbox{\texttt{err-GP}}$ gives a good estimate for the relative error; but if $\|E_{\alpha,\beta}(A)\|\ll 1$, $\mbox{\texttt{err-GP}}$ does not give a realistic estimate of the relative error; it estimates instead the absolute error.

To assess the forward stability of both algorithms, the relative errors are compared with the quantity $\cond_{ML}(A)u$, where $\cond_{ML}(A)$ denotes the relative condition number of the ML function at $A$ and $u$ is the unit  roundoff \cite[Ch.3]{Higham08}. The value of $\cond_{ML}(A)$ will be estimated by the code \texttt{funm\_condest1} available in \cite{mftoolbox}. Although the papers \cite{Garrappa15,Garrappa18} provide state-of-the-art methods for approximating the scalar ML function and the matrix ML function, respectively, we must be aware that they may not give relative errors close to $\cond_{ML}(A)u$ when $\|E_{\alpha,\beta}(A)\|\ll 1$. This might also happen when Algorithm \ref{algorithm3} calls the Cauchy integral form, because it requires many evaluations of the scalar ML function $E_{\alpha,\beta}(z)$, which will be carried out by means of the method provided in \cite{Garrappa15}.  

In Algorithm \ref{algorithm1}, we have chosen $\epsilon=10^{-15}$, so that $ k_2 = \lceil \log(\epsilon(1-b))/\log(b)-1\rceil = 50$. Hence, provided that the conditions of applicability of the Taylor series are satisfied,  $E_{\alpha,\beta}(A)$ is approximated by the Taylor polynomial of degree $k_2=50$, which is evaluated by the Paterson-Stockmeyer method, at the cost of about $13$ matrix multiplications. Since the cost of a Schur decomposition is about $25n^3$, corresponding to $12.5$ matrix multiplications, it is not convenient to use higher-degree polynomials to guarantee the competitivity of the Taylor method.   

\medskip\noindent {\it Experiment 1.} This experiment involves the $20\times 20$ matrix \texttt{A = gallery('redheff',20)} from MATLAB's gallery. This is the Redheffer matrix whose blocked Schur decomposition with reordering has a large block with eigenvalue equal to $1$. It has also been used in the experiments of \cite{Garrappa18,Higham21}. Figure \ref{fig-6} displays the relative errors arising in the computation of ${E}_{\alpha,\beta}(-A)$, with $\alpha$ having fixed values ($\alpha=0.5$ and $\alpha=0.8$) as $\beta$ varies from $1$ to $10$. As observed previously in \cite{Higham21}, the relative errors of the \texttt{GP-method} start increasing as $\beta$ goes from $6$ to $10$. This is in part because $\|{E}_{\alpha,\beta}(-A)\|_F$ approaches zero as $\beta$ grows toward $10$. As mentioned above, this might also occur when Algorithm \ref{algorithm3} uses the Cauchy integral formula. This is the case for the ten tests corresponding to  $\alpha=0.5$ (left graph), because in all of them Algorithm \ref{algorithm3} calls the Cauchy integral form combined with the Schur-Parlett method. We note that for
$\alpha=0.5$ and $\beta\in[6,10]$, one has $\|{E}_{\alpha,\beta}(-A)\|_F\in[10^{-1},\,10^{-5}]$. A quite different situation occurs for $\alpha=0.8$ (right graph), because now, for $\beta=5,\ldots,10$, Algorithm \ref{algorithm3} uses the Taylor series, thus preventing the loss of stability. Figure \ref{fig-6a} displays the corresponding CPU time taken by Algorithm \ref{algorithm3} and \texttt{GP-method}, where we can see that Algorithm \ref{algorithm3} has the best performance.

\begin{center}
	\begin{figure}[t]
		\centering
		\includegraphics[width=0.99\textwidth]{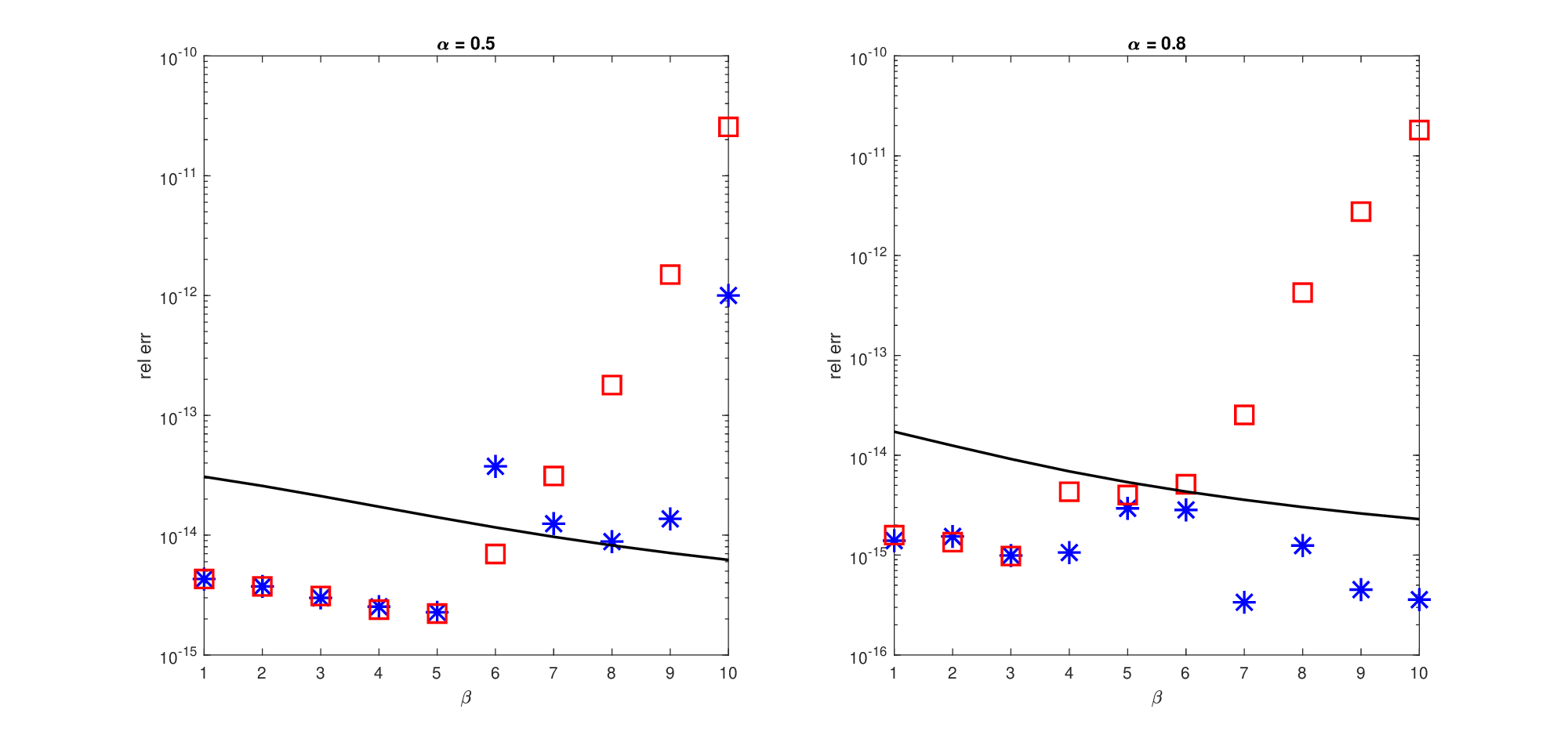}
		\caption{\small Results of Experiment 1. Relative errors of Algorithm \ref{algorithm3} (blue $\ast$) and of \texttt{GP-method} (red squares) for $\alpha=0.5$ and $\beta=1,\ldots,10$ (left) and $\alpha=0.8$ and $\beta=1,\ldots,10$ (right). The solid black line concerns to $\cond_{ML}(A)u$. }
		\label{fig-6}
	\end{figure}
\end{center}

\begin{center}
	\begin{figure}[t]
		\centering
		\includegraphics[width=0.99\textwidth]{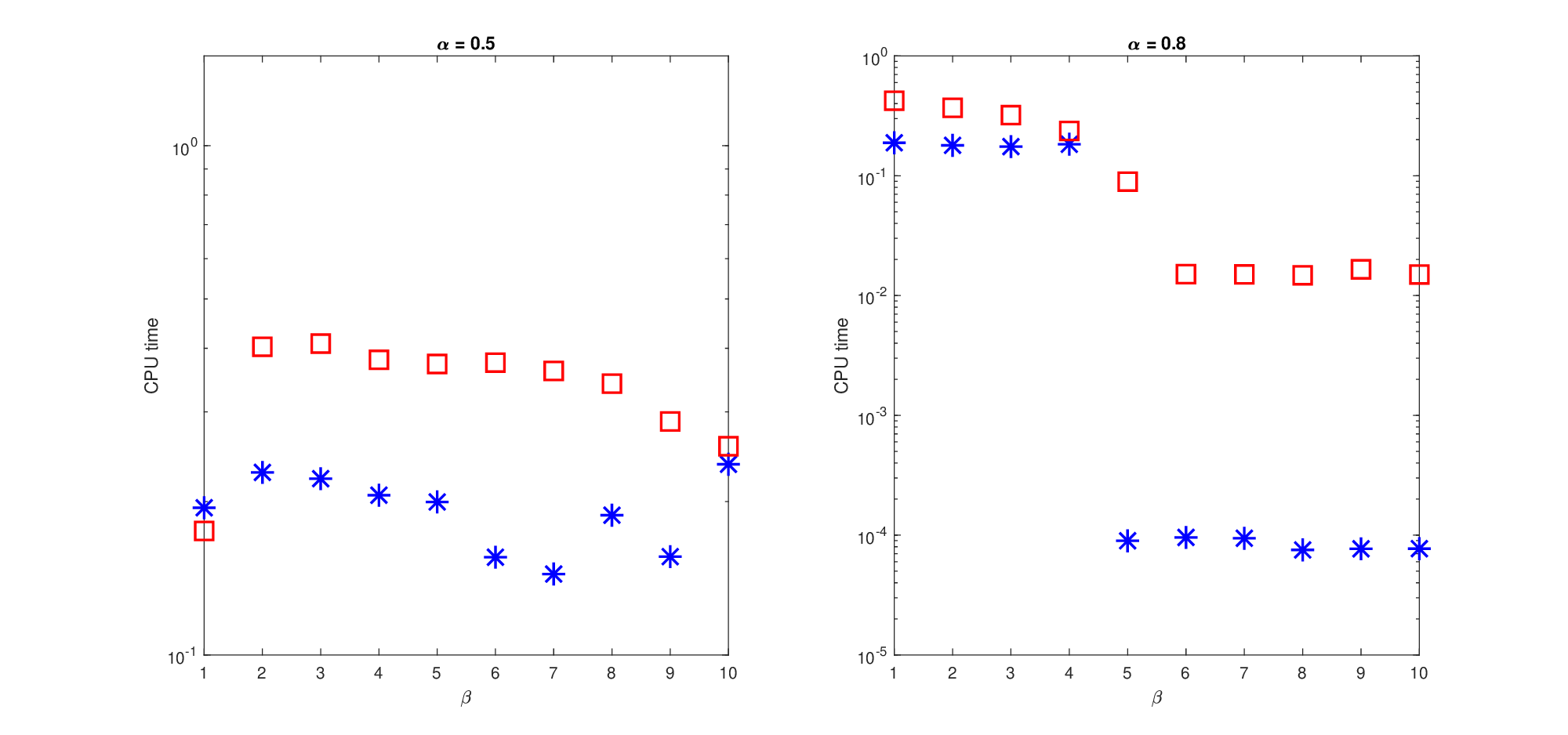}
		\caption{\small Results of Experiment 1. CPU time (seconds) of Algorithm \ref{algorithm3} (blue $\ast$) and of \texttt{GP-method} (red squares) for $\alpha=0.5$ and $\beta=1,\ldots,10$ (left) and $\alpha=0.8$ and $\beta=1,\ldots,10$ (right).  }
		\label{fig-6a}
	\end{figure}
\end{center}  

\medskip\noindent {\it Experiment 2.} This experiment is similar to the one in Example 2 of \cite{Garrappa18}. It involves four $40\times 40$ matrices with prescribed eigenvalues, with moderate multiplicities. The value of $\beta=1$ is fixed, while the value of $\alpha=0.6:0.4:2.6$ varies. The eigenvalues and the multiplicities of the matrices are included in Table \ref{tabela}. The relative errors for Algorithm \ref{algorithm3} and \texttt{GP-method}, as well as the estimates for $\cond_{ML}(A)u$, are plotted in Figure \ref{fig-7}. In Figure \ref{fig-7a} we can see the CPU time for this experiment. 

\begin{table}[h]
	\centering
		\begin{tabular}{ll}
		\hline
		 &  Eigenvalues (multiplicities)\\   \hline
		Matrix $A_1$ &   $\pm 1.0(5)\quad \pm 1.0001(4)\quad \pm 1.001(4)\quad \pm 1.01(4)\quad \pm 1.1(3)$\\
		Matrix $A_2$ & $\pm 1.0(8)\quad 2(8)\quad -5(8)\quad -10(8)$\\
		Matrix $A_3$ & $-1(2)\quad -5(2)\quad 1 \pm 10i(6)\quad -4 \pm 1.5i(6)\quad \pm 5i(6)$ \\
		Matrix $A_4$ & $1(4)\quad 1.0001(4)\quad 1.001(4)\quad 1 \pm 10i(7)\quad -4 \pm 1.5i(7)$\\
		\hline
	\end{tabular}
\caption{Eigenvalues and their multiplicities for the matrices used in Experiment 2} \label{tabela}
\end{table}
To design the matrices $A_1,\ldots,A_4$, we started by forming diagonal matrices having the assigned eigenvalues as entries, repeated according to the
multiplicities, and then applied a similarity transformation with randomized orthogonal matrices. These matrices have non trivial blocks in the blocked Schur form with reordering. The results are displayed in Figure \ref{fig-7}, where we observe that Algorithm \ref{algorithm3} and \text{GP-method} produce relative errors of similar magnitude, except in the cases when Algorithm \ref{algorithm3} calls the Taylor series, whose results are better. The deviation between the relative errors and the values of $\cond_{ML}(A)u$ is moderate. We stress once more that we are using the standard definition of the relative error, while the methods of \cite{Garrappa15,Garrappa18} do not.

\begin{center}
	\begin{figure}[t]
		\centering
		\includegraphics[width=0.99\textwidth]{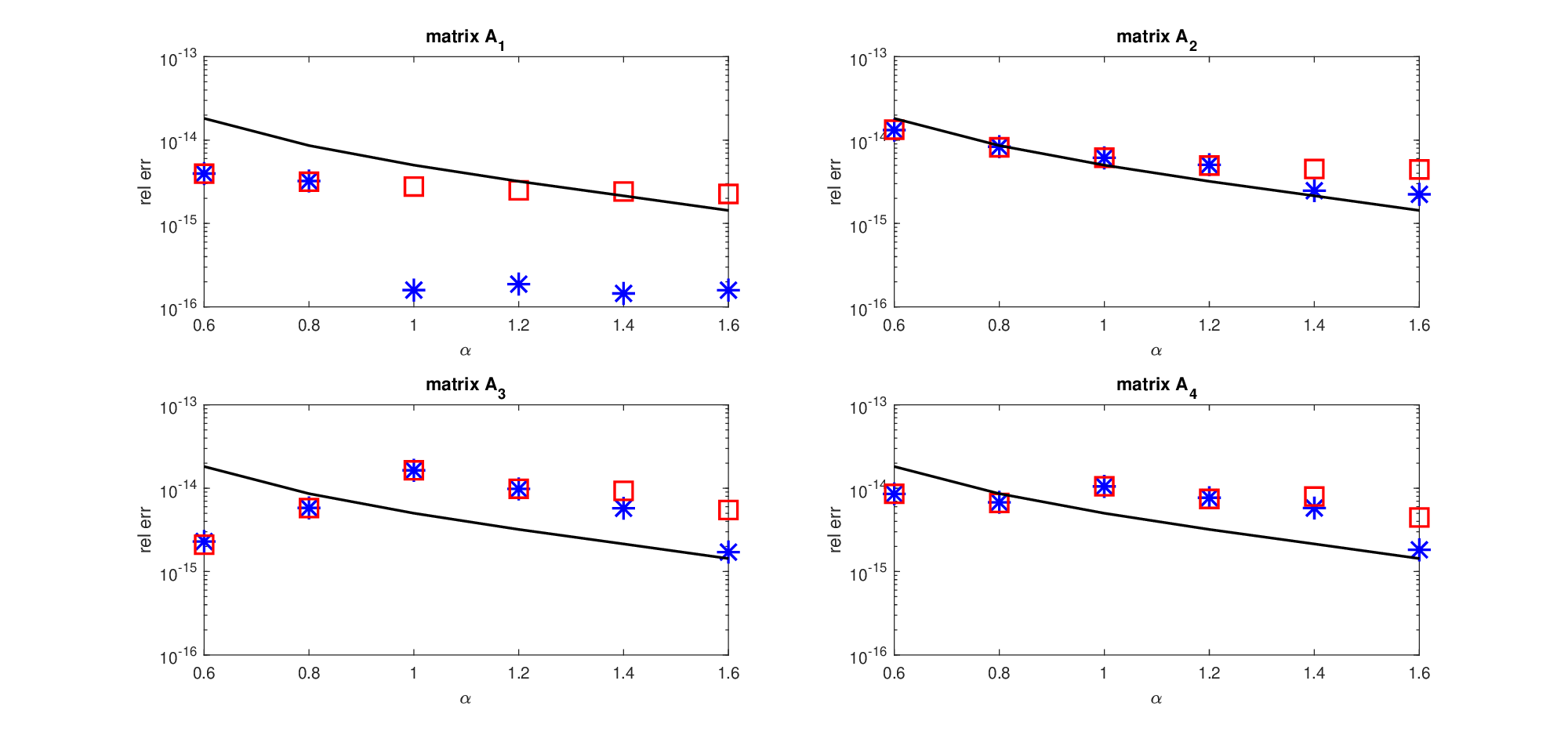}
		\caption{\small Results of Experiment 2. Relative errors of Algorithm \ref{algorithm3} (blue $\ast$) and of \texttt{GP-method} (red squares) with $\alpha=0.6:0.4:2.6$ and $\beta=1$. The solid black line represents estimates to $\cond_{ML}(A)u$. }
		\label{fig-7}
	\end{figure}
\end{center}  

\begin{center}
	\begin{figure}[t]
		\centering
		\includegraphics[width=0.99\textwidth]{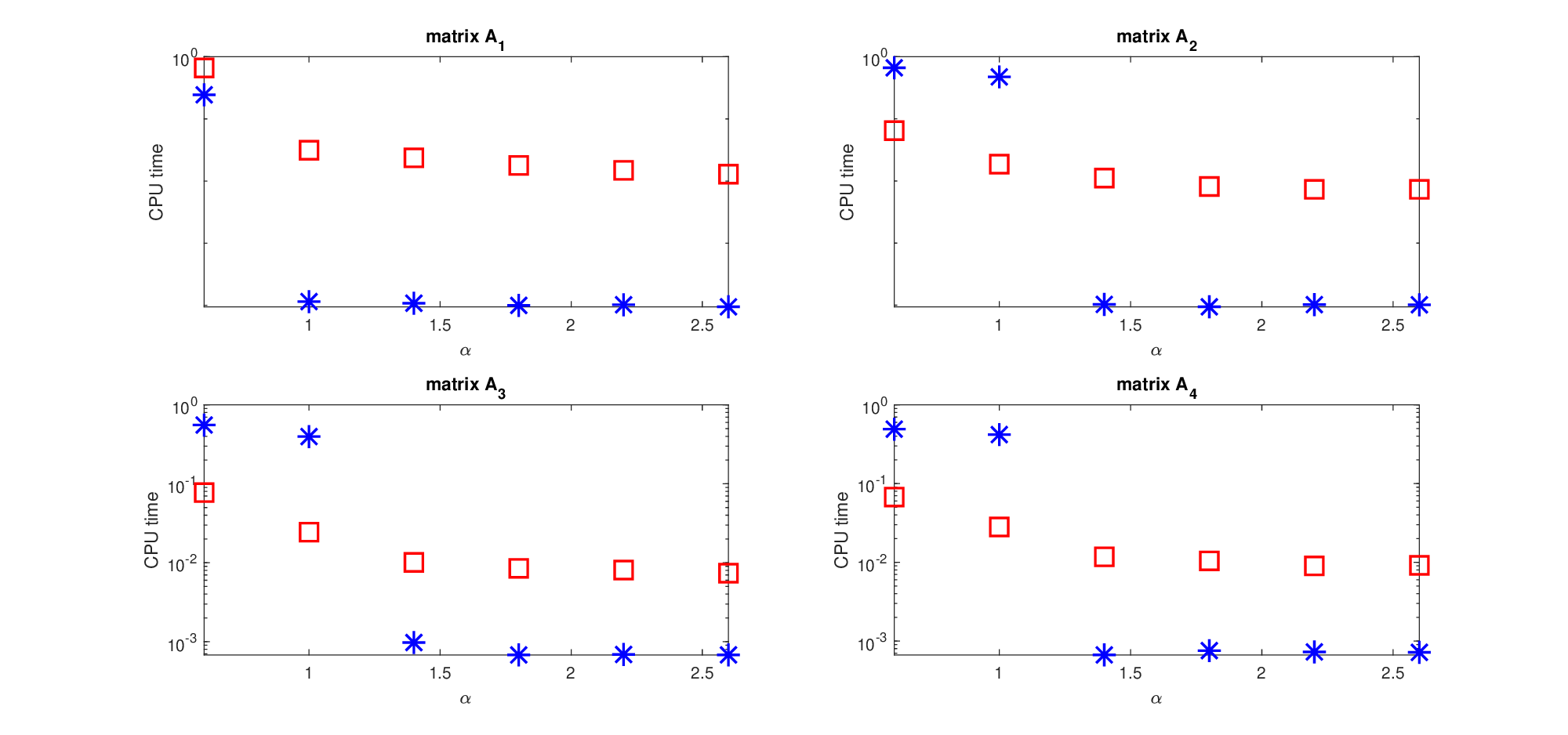}
		\caption{\small Results of Experiment 2. CPU time (seconds) of Algorithm \ref{algorithm3} (blue $\ast$) and of \texttt{GP-method} (red squares) with $\alpha=0.6:0.4:2.6$ and $\beta=1$.  }
		\label{fig-7a}
	\end{figure}
\end{center}  

\medskip\noindent {\it Experiment 3.} In this experiment, we consider $\alpha=0.8$ and $\beta=2$. It involves $15$ matrices from MATLAB's gallery, all of them with order $30$ and small blocks in the blocked Schur decomposition. Figure \ref{fig-8} compares the relative errors for Algorithm \ref{algorithm3} and \texttt{GP-method} with $\cond_{ML}(A)u$, while Figure \ref{fig-8a} displays the corresponding CPU time. All of the matrices were carefully chosen to avoid the occurrence of  overflow/underflow in the ML function.  

\begin{center}
	\begin{figure}[t]
		\centering
		\includegraphics[width=0.99\textwidth]{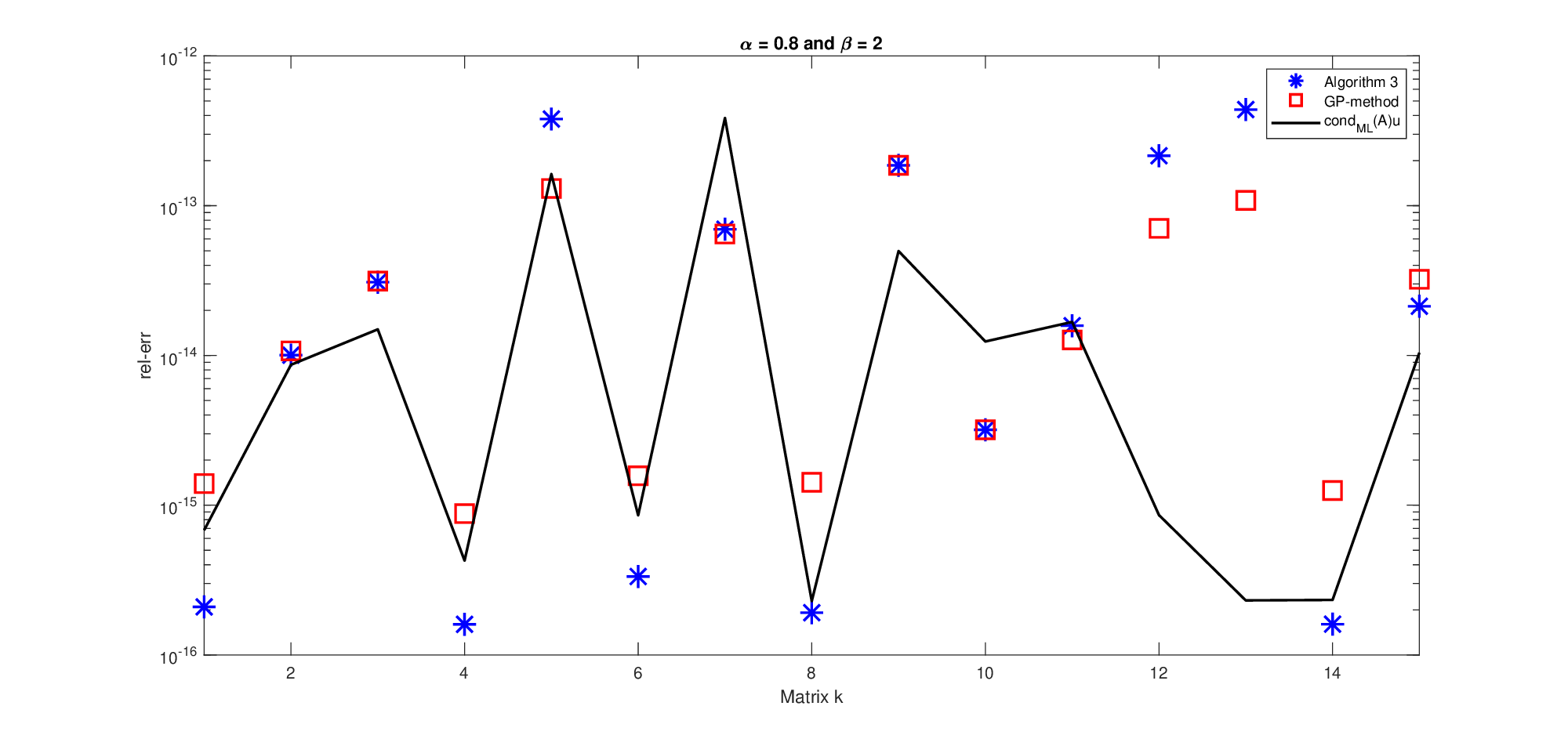}
		\caption{\small Results of Experiment 3. Relative errors of Algorithm \ref{algorithm3} (blue $\ast$) and of \texttt{GP-method} (red squares) with $\alpha=0.8$ and $\beta=2$ for fifteen matrices of order $30$ taken from MATLAB's gallery. The solid black line represents $\cond_{ML}(A)u$.  }
		\label{fig-8}
	\end{figure}
\end{center} 

\begin{center}
	\begin{figure}[t]
		\centering
		\includegraphics[width=0.99\textwidth]{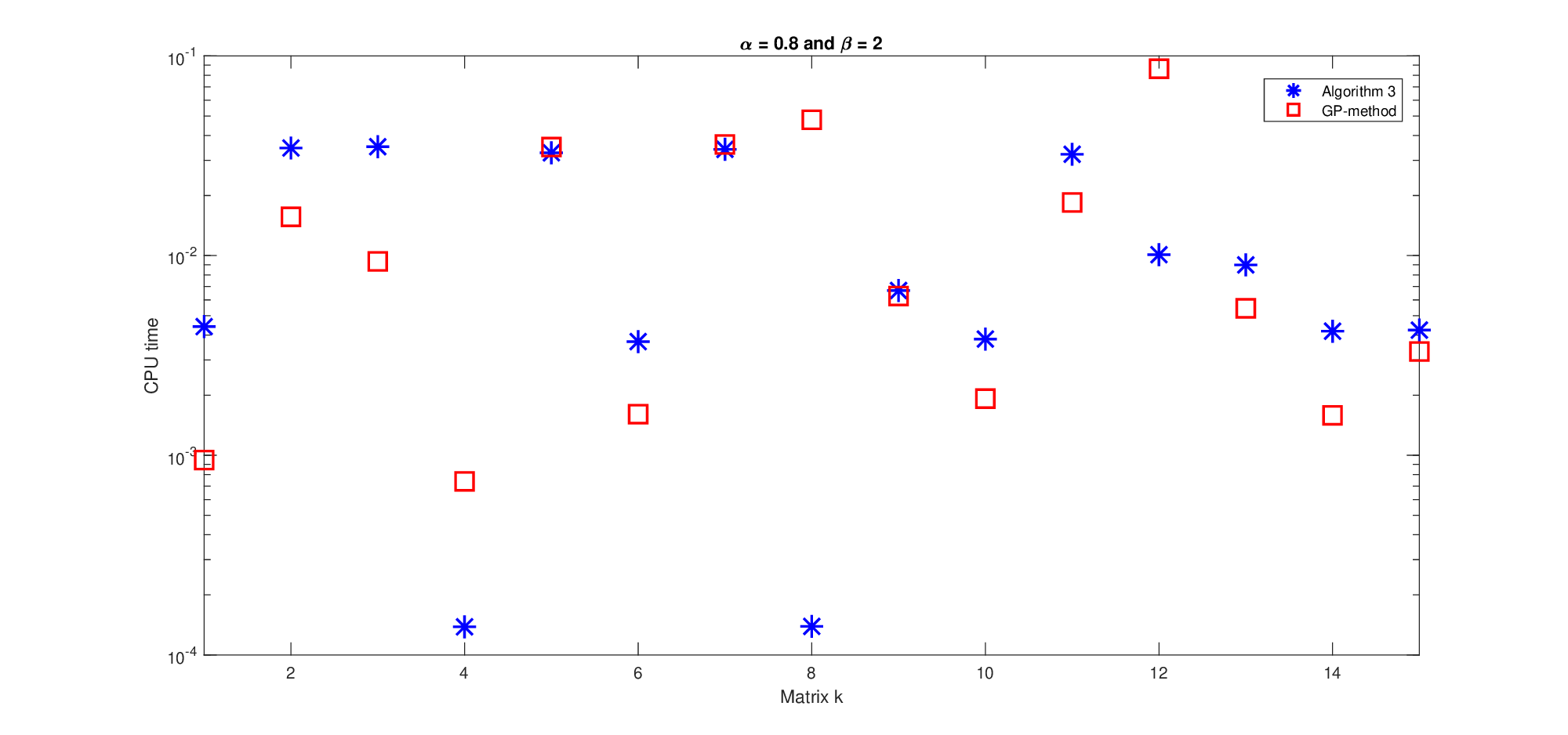}
		\caption{\small Results of Experiment 3. CPU time (in seconds) of Algorithm \ref{algorithm3} (blue $\ast$) and of \texttt{GP-method} (red squares) with $\alpha=0.8$ and $\beta=2$ for the fifteen matrices considered in Experiment 3. }
		\label{fig-8a}
	\end{figure}
\end{center} 

\medskip\noindent {\it Experiment 4.} This experiment involves sixteen $40\times 40$ matrices, where the first eight are Jordan blocks and the last eight are randomized atomic blocks with non-zero entries (except, of course, the ones in the lower part of the matrix). The results for the relative error are displayed in Figure \ref{fig-9}, while the CPU time is in Figure \ref{fig-9a}. This experiment shows clearly the advantage of using Algorithm \ref{algorithm3} in matrices with large atomic blocks, where Algorithm \ref{algorithm3} is about $100$ times faster. Moreover, the relative errors are smaller for almost all the test matrices.

\begin{center}
	\begin{figure}[t]
		\centering
		\includegraphics[width=0.99\textwidth]{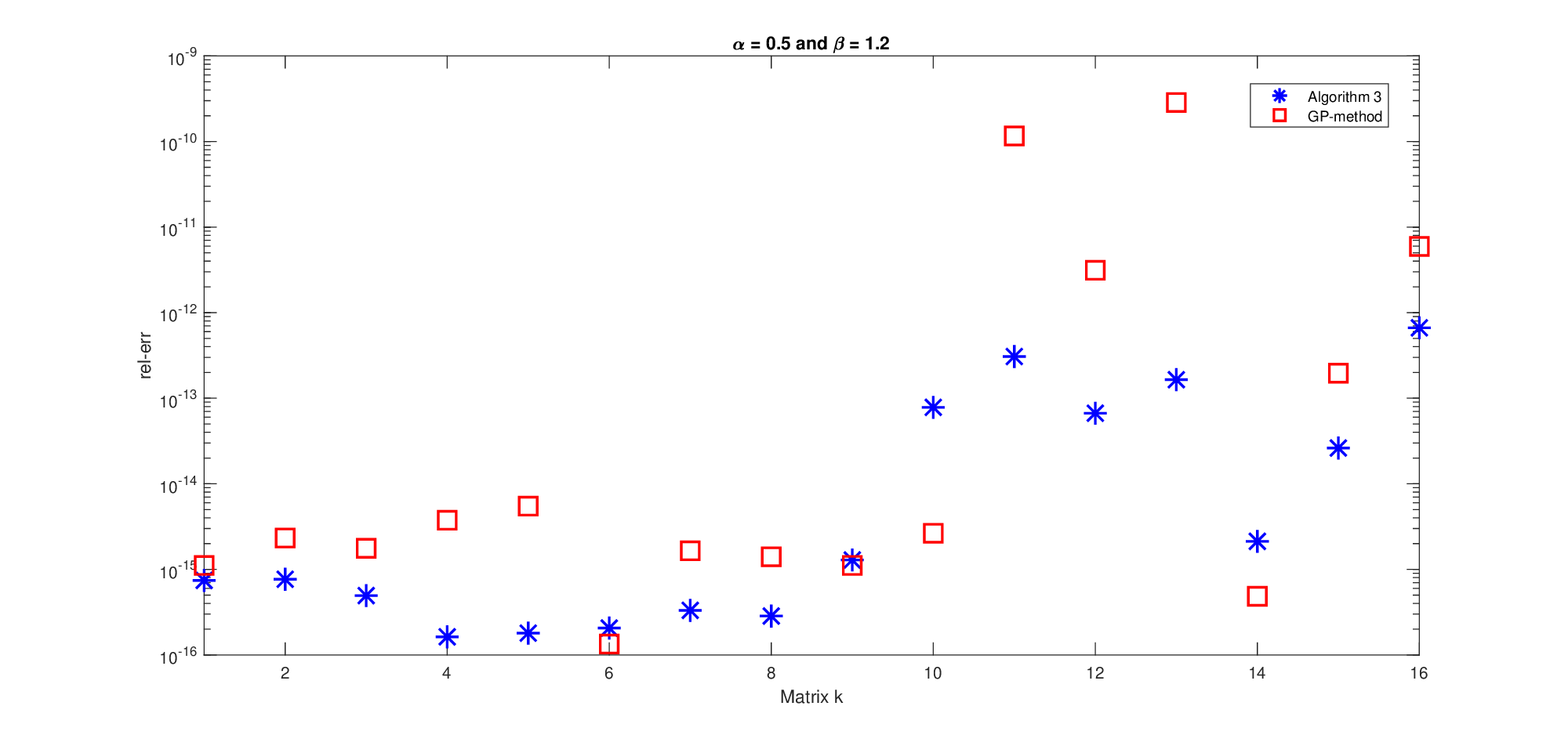}
		\caption{\small Results of Experiment 4. Relative errors of Algorithm \ref{algorithm3} (blue $\ast$) and of \texttt{GP-method} (red squares) with $\alpha=0.5$ and $\beta=1.2$ for sixteen atomic blocks of order $40$.   }
		\label{fig-9}
	\end{figure}
\end{center} 

\begin{center}
	\begin{figure}[t]
		\centering
		\includegraphics[width=0.99\textwidth]{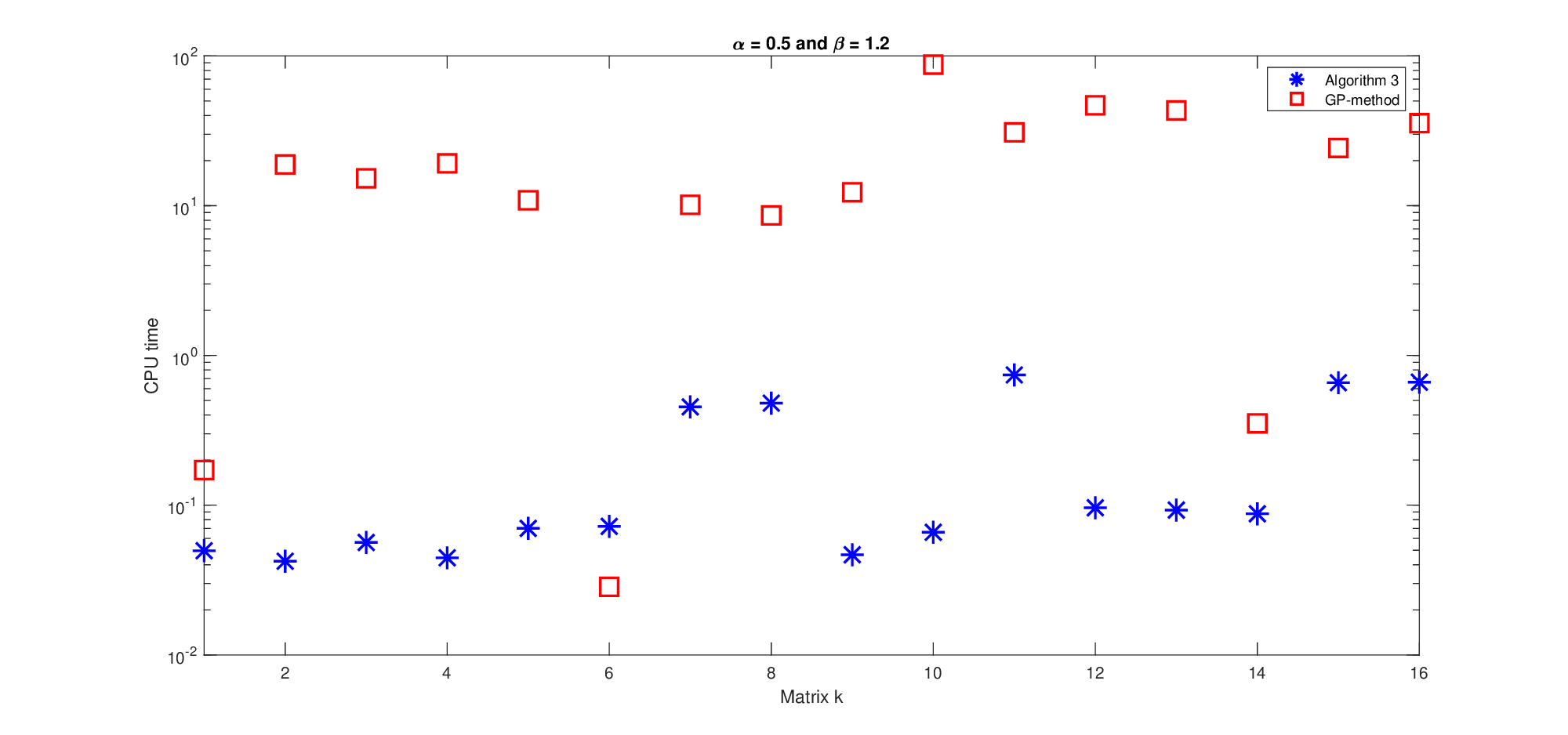}
		\caption{\small Results of Experiment 4. CPU time (in seconds) of Algorithm \ref{algorithm3} (blue $\ast$) and of \texttt{GP-method} (red squares) with $\alpha=0.5$ and $\beta=1.2$ for the sixteen sixteen atomic blocks of order $40$. }
		\label{fig-9a}
	\end{figure}
\end{center}

Now we shall make an overall comment on the results of the four experiments. When Algorithm \ref{algorithm3} calls the Taylor series for the computations (which depends on the the values of $\|A\|$, $\alpha$ and $\beta$), our tests have shown that it performs much better than \texttt{GP-method}. The same occurs when Algorithm \ref{algorithm3} uses the Schur-Parlett approach combined with the Cauchy integral formula for matrices with large atomic blocks. For the remaining cases, we can say that both methods have a similar behavior. We must recall, however, that Algorithm \ref{algorithm3} has the important advantage of being derivative free, so that it is much easier to extend to other matrix functions. We recall that the \texttt{GP-method} requires an efficient method for evaluating the higher order derivatives of the ML function. If extended to other matrix functions, a specific efficient method for evaluating their higher order derivatives would be required.  

\section{Conclusions}\label{sec-conclusions}

We have proved that if the values of $\|A\|$, $\alpha$ and $\beta$ satisfy certain conditions, the Taylor series can provide a very efficient method for evaluating the ML function (even when $\|A\|\geq 1$), that has the advantage of being derivative and transformation-free. For the situations where reliable results are not guaranteed by the Taylor method, we have shown that computing the ML function of the atomic blocks arising in the blocked Schur decomposition with reordering by means of the Cauchy integral formula provides a promising technique, if a suitable contour is chosen. This latter technique is derivative-free as well, but not transformation-free. A set of numerical experiments confirm that our overall algorithm combining the Taylor series with the Cauchy integral formula is competitive with the state-of-the-art method for IEEE double precision environments, and in some cases even better in terms of accuracy and CPU time.

\end{document}